\newcommand\myshade{85}
\colorlet{mylinkcolor}{violet}
\colorlet{mycitecolor}{blue}
\colorlet{myurlcolor}{cyan}
\numberwithin{equation}{section}
\newtheorem{theorem}{Theorem}[section]
\newtheorem{proposition}[theorem]{Proposition}
\newtheorem{lemma}[theorem]{Lemma}
\newtheorem{theoremA}{Theorem}
\theoremstyle{definition}
\newtheorem{remark}[theorem]{Remark}
\newtheorem{example}[theorem]{Example}
\newtheorem{definition}[theorem]{Definition}
\newcommand{\ZZ}{\mathbb{Z}}
\newcommand{\xx}{\mathbf{x}}
\newcommand{\Hom}{\mathrm{Hom}}
\newcommand{\Ext}{\mathrm{Ext}}
\newcommand{\ssi}{\Leftrightarrow}
\newcommand{\zg}{\gamma}
\newcommand{\zG}{\Gamma}
\newcommand{\zs}{\sigma}
\newcommand{\calf}{\mathcal{F}}
\newcommand{\cala}{\mathcal{A}}
\newcommand{\calc}{\mathcal{C}}
\newcommand{\call}{\mathcal{L}}
\newcommand{\cald}{\mathcal{D}}
\newcommand{\zehn}{\hspace{10pt}}
\newcommand{\fuenf}{\hspace{5pt}}
\newcommand{\fuenfm}{\hspace{-5pt}}
\title{Frieze vectors and unitary friezes}
\author{Emily Gunawan}
\address{Department of Mathematics\\ University of Oklahoma\\ 
Norman, OK 73019-3103, USA}
\email{egunawan@ou.edu}
\author{Ralf Schiffler}
\address{Department of Mathematics\\ University of Connecticut\\
Storrs, CT 06269-1009, USA}
\thanks{The authors were supported by the NSF-CAREER grant  DMS-1254567 and by the University of Connecticut. The second author was also supported by the NSF grant  DMS-1800860}
\email{schiffler@math.uconn.edu}
\subjclass[2010]{Primary  13F60 
Secondary
16G20} 
\begin{document}

\begin{abstract}
Let $Q$ be a quiver without loops and 2-cycles, let $\cala(Q)$ be the corresponding cluster algebra and let $\xx$ be a cluster. 
 We introduce a new class of integer vectors which we call frieze vectors relative to $\xx$. These frieze vectors are defined as solutions of certain  Diophantine equations given by the cluster variables in the cluster algebra. We show that every cluster gives rise to a frieze vector and that the frieze vector determines the cluster. 

We also study friezes of type $Q$ as homomorphisms from the cluster algebra to an arbitrary integral domain. Moreover, we show that every positive integral frieze of affine Dynkin type $\widetilde{\mathbb{A}}_{p,q}$ is unitary, which means it is obtained by specializing each cluster variable in one cluster to  the constant 1. This completes the answer to the question of unitarity for all positive integral friezes of Dynkin and affine Dynkin types. 
  
 \end{abstract}

\maketitle

\setcounter{tocdepth}{1}
{\tableofcontents
}
\section{Introduction}\label{sect 1}

Let $Q$ be a quiver without loops and 2-cycles and let $\cala(Q)$ be the corresponding cluster algebra with trivial coefficients. We define a {\em frieze of type $Q$} to be a ring homomorphism $\calf\colon\cala(Q)\to R$ from the cluster algebra to an integral domain $R$. The frieze $\calf$ is called \emph{non-zero} if every cluster variable is mapped to a non-zero element of $R$ and $\calf$ is said to be \emph{unitary} if there exists a cluster $\xx$ such that  $\calf(x)$ is a unit in $R$, for all $x\in\xx$. Moreover $\calf$ is called \emph{integral} if $R=\ZZ$, and \emph{positive} if $R=\ZZ$ and every cluster variable is mapped to a positive integer. 

Positive integral friezes of Dynkin type $\mathbb{A}_n$ are precisely the classical Conway-Coxeter friezes, where the classical frieze pattern is given by displaying the values of $\calf$ on the cluster variables in the shape of the Auslander-Reiten quiver of the cluster category. 

Every non-zero frieze is determined by its values $\calf(\xx)=(a_1,\ldots,a_n)$ on an arbitrary cluster $\xx=(x_1,\ldots,x_n)$ in $\cala(Q)$. It is therefore natural to ask which values $(a_1,\ldots,a_n)$ produce positive unitary integral friezes. We call such a vector $(a_1,\ldots,a_n)$ a \emph{unitary frieze vector relative to the cluster $\xx$}. Our first main result is the following.

\begin{theoremA}\label{thm A}
  Let $Q$ be a quiver without loops and 2-cycles and let $\xx=(x_1,\ldots,x_n)$ be an arbitrary cluster of $\cala(Q)$. Then there is a bijection
 \[
\begin{array}{rcl}
 \phi\colon\{\textup{unordered clusters in $\cala(Q)$}\} &\longrightarrow&\left\{\begin{array}{l}\textup{positive unitary frieze } \\ \textup{vectors relative to $\xx$}\end{array}\right\} \\
 \xx'=\{x_1',\ldots,x_n'\} &\longmapsto& \phi(\xx')=(a_1,\ldots,a_n).
\end{array}\]
\end{theoremA}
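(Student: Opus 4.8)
The plan is to construct $\phi$ from an explicit ``evaluation at $1$'' procedure and to produce an explicit inverse. Given a cluster $\xx'=\{x_1',\dots,x_n'\}$, the Laurent phenomenon realizes $\cala(Q)$ as a subring of $\ZZ[(x_1')^{\pm1},\dots,(x_n')^{\pm1}]$; composing with the ring homomorphism sending each $x_i'\mapsto 1$ yields a ring homomorphism $\calf_{\xx'}\colon\cala(Q)\to\ZZ$, that is, an integral frieze of type $Q$. Since $Q$ is a quiver, $\cala(Q)$ is skew-symmetric and the positivity theorem applies: every cluster variable is a Laurent polynomial in $\xx'$ with coefficients in $\ZZ_{\geq0}$ that is not the zero polynomial, so $\calf_{\xx'}$ sends every cluster variable to a positive integer. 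Thus $\calf_{\xx'}$ is a positive integral frieze, and it is unitary because $\calf_{\xx'}(x_i')=1$ for all $i$. As $\calf_{\xx'}$ depends only on $\xx'$ as an unordered set, we define $\phi(\xx')$ to be the frieze vector of $\calf_{\xx'}$ relative to the fixed cluster $\xx$; by construction this is a positive unitary frieze vector relative to $\xx$, so $\phi$ is well defined.

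Surjectivity is nearly formal. Let $(a_1,\dots,a_n)$ be a positive unitary frieze vector relative to $\xx$, realized by a positive unitary integral frieze $\calf$. Unitarity gives a cluster $\xx'$ with $\calf(x_i')$ a unit of $\ZZ$ for every $i$; since $\calf$ is positive, $\calf(x_i')\in\{1,-1\}$ and $\calf(x_i')>0$, hence $\calf(x_i')=1$. Thus $\calf$ and $\calf_{\xx'}$ are non-zero friezes agreeing on the cluster $\xx'$, and since a non-zero frieze is determined by its values on any cluster, $\calf=\calf_{\xx'}$. Evaluating at $\xx$ gives $(a_1,\dots,a_n)=\phi(\xx')$.

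Injectivity is the heart of the matter. Suppose $\phi(\xx')=\phi(\xx'')$; then $\calf_{\xx'}$ and $\calf_{\xx''}$ have the same frieze vector relative to $\xx$, so again $\calf_{\xx'}=\calf_{\xx''}=:\calf$. In particular $\calf(x_i'')=1$ for every variable $x_i''$ of $\xx''$. Expanding $x_i''$ by the positivity theorem as $x_i''=\sum_{\mathbf{e}}c_{\mathbf{e}}\,(\xx')^{\mathbf{e}}$ with all $c_{\mathbf{e}}\in\ZZ_{\geq0}$ and not all zero, and applying $\calf=\calf_{\xx'}$, we obtain $1=\sum_{\mathbf{e}}c_{\mathbf{e}}$; hence exactly one coefficient equals $1$ and the rest vanish, so $x_i''$ is a Laurent monomial in the variables of $\xx'$. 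It then suffices to invoke the key lemma: a cluster variable that equals a Laurent monomial in a cluster $\xx'$ is one of the variables of $\xx'$. Granting it, $x_i''\in\xx'$ for every $i$, and since $|\xx''|=|\xx'|=n$ we conclude $\xx''=\xx'$.

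I expect this lemma to be the main obstacle. To prove it, let the cluster variable $y$ equal $(\xx')^{\mathbf{e}}$; since the numerator in the reduced form of $y$ relative to $\xx'$ must be coprime to each $x_i'$, the denominator vector of $y$ with respect to $\xx'$ is $-\mathbf{e}$. By the non-negativity of denominator vectors of non-initial cluster variables, either $y\in\xx'$ (and $\mathbf{e}$ is a standard basis vector), or $-\mathbf{e}\geq0$, i.e.\ $\mathbf{e}\leq0$. In the latter case, if $\mathbf{e}\neq0$ then $(\xx')^{-\mathbf{e}}$ is a cluster monomial in $\cala(Q)$ whose product with $y$ equals $1$, making $y$ a unit of $\cala(Q)$, which is impossible because the inverse of a cluster variable fails to be a Laurent polynomial with respect to a neighboring cluster; and if $\mathbf{e}=0$ then $y=1$ is not a cluster variable. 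Hence $y\in\xx'$. (Alternatively one can argue via the irreducibility of cluster variables: once $\mathbf{e}\geq0$, if $y$ is not one of the $x_j'$ then $(\xx')^{\mathbf{e}}$ is a product of at least two cluster variables of $\xx'$, contradicting irreducibility.) I would settle on whichever of these structural inputs is most convenient to cite in the generality of an arbitrary quiver $Q$.
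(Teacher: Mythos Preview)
Your proof follows the same overall architecture as the paper's: define $\phi$ by specializing a cluster $\xx'$ to $(1,\dots,1)$; obtain surjectivity directly from the definition of unitary; and reduce injectivity, via Proposition~\ref{prop 1}, to the key lemma that a cluster variable which is a Laurent monomial in $\xx'$ must already lie in $\xx'$. The paper isolates this lemma as Proposition~\ref{prop 4} and proves it using the cluster character: the number of terms in the Laurent expansion of $u$ equals $\sum_{\underline e}\chi\bigl(Gr_{\underline e}(\Ext^1_{\calc_Q}(T,M(u)))\bigr)$, which is at least~$2$ whenever $\Ext^1_{\calc_Q}(T,M(u))\neq 0$; combined with positivity, this forces $M(u)$ to be a summand of $T$, hence $u\in\xx'$. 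Your route to the same lemma is different: you read off the denominator vector $-\mathbf e$ of the monomial, invoke non-negativity of $d$-vectors for non-initial cluster variables to get $\mathbf e\le 0$, and then rule out $\mathbf e\le 0$ by showing a cluster variable cannot be a unit in $\cala(Q)$. This is a clean alternative, and the non-unit argument via a neighboring cluster is nice and elementary. The one caveat is the generality of the $d$-vector input: non-negativity of denominator vectors for \emph{arbitrary} quivers is itself a nontrivial structural fact whose known proofs (at the level of generality you need) go through categorification or comparable machinery, so your argument is not obviously lighter than the paper's---it trades one deep input (the cluster-character term count) for another ($d$-vector positivity). Either way the argument is correct; just be explicit about which reference you are invoking for the $d$-vector statement in the generality of an arbitrary $2$-acyclic quiver.
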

Thus every cluster $\xx'$ defines a unique unitary frieze vector. One can thus think of the frieze vectors as another parametrization of the clusters in the cluster algebra. The frieze vectors are different from other known vectors  appearing in cluster algebra theory like denominator vectors, $c$-vectors or $g$-vectors.
\smallskip

Our second main result is about the unitarity of positive integral friezes. Since Conway and Coxeter's work in 1973, it is known that every positive integral frieze of Dynkin type $\mathbb{A}$ is unitary. For Dynkin types $\mathbb{D}$ and $\mathbb{E}$ there exist non-unitary positive integral friezes, see \cite{FP}. We extend these results to the affine Dynkin types as follows.

\begin{theoremA} \label{thm B}
 Let $Q$ be a quiver of type $\widetilde{\mathbb{A}}_{p,q}$ and let $\calf\colon\cala(Q)\to \ZZ$ be a positive integral frieze. Then $\calf$ is unitary.
\end{theoremA}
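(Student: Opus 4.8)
The plan is to use the model of $\cala(\widetilde{\mathbb{A}}_{p,q})$ coming from an annulus $C_{p,q}$ with $p$ marked points on one boundary component and $q$ on the other: cluster variables are arcs, clusters are ideal triangulations, and in each exchange relation a boundary segment contributes the constant $1$. A positive integral frieze $\calf$ is then unitary precisely when the collection of arcs on which $\calf$ takes the value $1$ contains a triangulation, and I would argue by induction on $n=p+q$. The engine of the induction is the following reduction: if $\calf(x_\gamma)=1$ for a \emph{bridging} arc $\gamma$ (one joining the two boundary components), then cutting $C_{p,q}$ along $\gamma$ produces a disc with $p+q+2$ marked points, i.e.\ type $\mathbb{A}_{p+q-1}$, and (using $\calf(x_\gamma)=1$) the values of $\calf$ on the arcs disjoint from $\gamma$ satisfy the exchange relations of this disc, hence define a positive integral frieze of type $\mathbb{A}$, which is unitary by Conway--Coxeter; then an all-$1$ triangulation of the disc together with $\gamma$ is an all-$1$ triangulation of $C_{p,q}$. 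There is an analogous reduction when $\calf(x_\gamma)=1$ for an \emph{ear} (the third side of a triangle whose other two sides are boundary segments) lying on a boundary component with $\ge 2$ marked points: cutting off that triangle deletes one marked point, giving $C_{p-1,q}$ or $C_{p,q-1}$, on which the restricted frieze is unitary by the induction hypothesis. So it suffices to show that a positive integral frieze of $\widetilde{\mathbb{A}}_{p,q}$ with $n\ge 2$ always takes the value $1$ on a bridging arc or on such an ear.

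For $n=2$ (the Kronecker quiver) this is the base case. Here the cluster variables form a single bi-infinite sequence $(x_m)_{m\in\ZZ}$ with $x_{m-1}x_{m+1}=x_m^2+1$ and clusters $\{x_m,x_{m+1}\}$; writing $a_m=\calf(x_m)\in\ZZ_{>0}$, a telescoping computation shows $w:=(a_{m-1}+a_{m+1})/a_m$ is independent of $m$, any common divisor of $a_m,a_{m+1}$ divides $a_{m-1}a_{m+1}-a_m^2=1$ so $w\in\ZZ$, and $w a_m=a_{m-1}+a_{m+1}\ge 2\sqrt{a_m^2+1}>2a_m$ gives $w\ge 3$. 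Thus $a_{m+1}=w a_m-a_{m-1}$ is strictly convex and attains a minimum $a_k$, and $a_{k-1},a_{k+1}\ge a_k$ together with $a_{k-1}a_{k+1}=a_k^2+1$ forces $a_k=1$; so $\calf$ is identically $1$ on $\{x_{k-1},x_k\}$.

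For $n\ge 3$ I would run the analogous descent on a winding family of bridging arcs. Fix a triangulation built from two ``fans'' --- one based at a marked point of each boundary component --- joined by two bridging arcs, and let $(\gamma_m)_{m\in\ZZ}$ be the family of bridging arcs obtained by iterating the Dehn twist around the core curve. One checks that the exchange relations have the constant form $x_{\gamma_{m-1}}x_{\gamma_{m+1}}=x_{\gamma_m}^2+x_a x_b$ for two peripheral arcs $a,b$ not depending on $m$ (with the convention $x=1$ for a degenerate, bigon-bounding arc, so that for $n=2$ this recovers the relation above). Setting $v_m=\calf(x_{\gamma_m})$ and $D=\calf(x_a)\calf(x_b)\ge 1$, the same telescoping gives a constant $w=(v_{m-1}+v_{m+1})/v_m>2$, so $(v_m)$ is strictly convex with a minimum $v_k$; writing $v_{k\pm1}=v_k+s,\,v_k+t$ with integers $s,t\ge 0$ one gets $(w-2)v_k^2+st=D$ with $s+t=(w-2)v_k\ge 1$, hence $v_k\le v_k(s+t)=(w-2)v_k^2\le D$. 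If $D=1$ this forces $v_k=1$ and $\gamma_k$ is the desired bridging arc. If $D>1$ then $\calf(x_a)>1$ or $\calf(x_b)>1$, and one uses instead the type-$\mathbb{A}$ structure carried by the peripheral arcs on the corresponding side of the annulus: running a Conway--Coxeter-type argument there produces a value-$1$ ear, which --- after choosing the initial fans appropriately, so that this ear lies on a side with $\ge 2$ marked points --- feeds back into the inductive reduction, completing the induction.

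The main obstacle is this last point: coordinating the descent on the winding family (controlled by the constant $w$ and the discriminant $D$) with the type-$\mathbb{A}$ frieze structure on the peripheral arcs, so as to actually exhibit a \emph{reducible} value-$1$ arc --- equivalently, to rule out a non-unitary positive integral frieze in which every bridging arc and every ear has value $\ge 2$. The Kronecker case is the prototype: there $D=1$ is automatic and strict convexity alone yields the $1$; the general case has to be reduced to this by combining the affine recurrence for the bridging family with the classical theory of positive integral friezes of type $\mathbb{A}$ applied to the peripheral part.
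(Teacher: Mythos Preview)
Your program is genuinely different from the paper's approach, and the skeleton is sound: the reductions by cutting along a value-$1$ bridging arc (down to type $\mathbb{A}$) or along a value-$1$ ear (down to a smaller annulus) are correct, and the Kronecker base case is fine. But the proof is incomplete exactly where you flag it: you never establish that such a value-$1$ arc exists. Your Dehn-twist recurrence $v_{m-1}v_{m+1}=v_m^2+D$ yields only $v_k\le D$ at the minimum, which forces $v_k=1$ only when $D=1$; for $D>1$ you gesture at a ``type-$\mathbb{A}$ structure on the peripheral arcs'' producing a value-$1$ ear, but this is not justified. The arcs $a,b$ appearing in your recurrence are the once-winding loops at $P$ and $Q$, not diagonals of a polygon, and there is no evident mechanism by which $\calf(x_a)>1$ forces some ear to have value $1$. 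A correct variant of the idea is that for $p\ge 4$ the \emph{non-winding} peripheral arcs on the $p$-side do form an honest $\mathbb{A}_{p-3}$ subsystem whose exchange relations involve only one another and boundary segments, and Conway--Coxeter then does give a value-$1$ ear regardless of $D$; but this leaves the cases with $p,q\le 3$ unaddressed, and those are not covered by your bridging-arc bound alone. As written, the proposal is a plausible outline rather than a proof.

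The paper bypasses this difficulty with a different descent. Rather than searching for a value-$1$ arc to cut along, it starts from any cluster consisting entirely of bridging arcs and proves (Lemma~\ref{lem 4.1}) that mutating at a variable of maximal frieze value strictly decreases that value, and moreover that if the new variable happens to be regular then its new value is exactly $1$. Thus the hypothesis of the lemma is preserved under mutation, and iterating drives the maximum down to $1$. The analysis is entirely local---one examines the quadrilateral around the flipped arc and, in one subcase, a short fan of adjacent bridging arcs---and never requires global control of $D$ or of the peripheral sector.
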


Our proof is constructive. We give an algorithm that starts from an arbitrary positive integral frieze $\calf$ and produces the unique cluster $\xx$ such that $\calf(\xx)=(1,\ldots,1)$.
In the other affine types $\widetilde{\mathbb{D}}$ and $\widetilde{\mathbb{E}}$, there are non-unitary positive integral friezes. 

It is natural to ask if friezes of types $\mathbb{A}$ and $\widetilde{\mathbb{A}}$ remain unitary if one replaces the ring of integers by other integral domains. However, already over the Gaussian integers we  give an example of a non-unitary frieze of Dynkin type 
 $\mathbb{A}_2$. The classification of friezes over the Gaussian integers or other integral domains besides $\mathbb{Z}$ is open even in type $\mathbb{A}$. For type $\mathbb{A}_1$ there are 12 non-zero friezes over the Gaussian integers, see \cite{F}.

The paper is organized as follows. In section \ref{sect 2}, we give the formal definition of friezes and show how they are a generalization of Conway-Coxeter friezes. We also give several examples of friezes of type $\mathbb{A}_3$ over different rings. Section \ref{sect 3} is devoted to the definition of frieze vectors and the proof of Theorem \ref{thm A}, and Theorem \ref{thm B} is proved in section \ref{sect 4}.

\section{Friezes}\label{sect 2}
Friezes of type $\mathbb{A}_n$ were classified by  Conway and Coxeter in \cite{ConCox1,ConCox2} in 1973. More than 30 years later, Caldero and Chapoton discovered a relation between friezes and cluster algebras in \cite{CC}. Since then friezes were studied by many authors, see for example \cite{BM, ARS, KS, MG1, MGOT, FP, BFGST,  BRM, BFPT, GMV, LLMSS}.
For a survey we refer the reader to \cite{MG2}.

Usually classical friezes are defined as certain planar arrays of positive integers that satisfy a diamond relation. In this paper however, we take a different point of view and we define a frieze to be a homomorphism from an arbitrary cluster algebra to an arbitrary integral domain $R$.  The usual planar array is obtained from the Auslander-Reiten quiver of the corresponding cluster category by replacing the indecomposable objects (i.e. the vertices of the Auslander-Reiten quiver) by the values of the homomorphism on the corresponding cluster algebra elements. Friezes as homomorphisms to the integers were also considered in \cite{F,FP} and in \cite[Appendix B]{BFGST2}, and friezes with values in subsets of the complex numbers in \cite{CH}.

\subsection{Definition}
Let $Q$ be a quiver without loops and 2-cycles and let $\cala(Q)$ be the corresponding cluster algebra with trivial coefficients, see \cite{FZ}. We could just as well include coefficients in our definition, but since we are not using them in this paper we impose trivial coefficients for simplicity.

\begin{definition}
 (1) A \emph{frieze of type $Q$} is a ring homomorphism
 \[\calf\colon \cala(Q)\longrightarrow R\] 
 from the cluster algebra to an integral domain $R$. The frieze is called \emph{integral} if $R=\mathbb{Z}$.
 
 (2) A frieze $\calf\colon \cala(Q)\longrightarrow R$ is said to be \emph{unitary} if there exists a cluster $\xx$ in $\cala(Q)$ such that every cluster variable $x\in\xx$ is mapped by $\calf$ to a unit in $R$.
 
 (3) A frieze is said to be \emph{non-zero} if every cluster variable in $\cala(Q)$ is mapped by $\calf$ to a non-zero element of $R$.
 
 (4) An integral frieze is said to be \emph{positive}  if every cluster variable in $\cala(Q)$ is mapped by $\calf$ to a positive integer.
\end{definition}

\begin{remark}
 Our definition of unitary friezes agrees  with that of \cite{MG1,FP} for positive integral friezes. Note however that if the integral frieze is not positive, we also allow specialization at -1.
\end{remark}

\subsection{Cluster category and Auslander-Reiten quiver} Let $Q$ be a quiver without loops and 2-cycles.
If the quiver $Q$ is mutation equivalent to an acyclic quiver $Q'$, we let $\calc$ 
be the cluster category 
$\calc_Q=\cald^b(\textup{mod}\,kQ')/\tau^{-1}[1]$ introduced in \cite{BMRRT} and in \cite{CCS} for type $\mathbb{A}$. More generally, if $Q$ comes with a non-degenerate potential, we let $\calc$ 
be the generalized cluster category introduced in \cite{A}. We denote by 
$\zG(\calc)$  
the Auslander-Reiten quiver of 
$\calc$. 
Its vertices are the isoclasses of indecomposable objects in 
$\calc$ 
and its arrows are given by irreducible morphisms in 
$\calc$. 
If $Q$ is mutation equivalent to an acyclic quiver $Q'$, then 
$\zG(\calc)$ 
has a special connected component, called the \emph{transjective component}, that contains both the preprojective component and the preinjective component of $\textup{mod}\,kQ'$. In finite type, this transjective component is all of 
$\zG(\calc)$.

The cluster category is a triangulated category  equipped with a Serre functor (if it is Hom-finite) given by the Auslander-Reiten translation $\tau$. Moreover 
$\calc$
has Auslander-Reiten triangles and it is 2-Calabi-Yau, meaning that 
$\Ext^1_{\calc}(X,Y)\cong D\Ext^1_{\calc}(Y, X)$, 
where $D=\Hom(-,k)$ denotes the standard duality, see \cite{K,A}. An object 
$X\in \calc$ 
is called rigid if 
$\Ext^1_{\calc}(X,X)=0$, 
and an indecomposable rigid object in 
$\calc$
is called reachable if it can be reached under mutation from the initial cluster-tilting object. If $Q$ is mutation equivalent to an acyclic quiver all rigid indecomposable objects are reachable and all indecomposables in the transjective component are rigid. 

The cluster character is a map 
$X_?\colon\calc\to \textup{Frac} \cala(Q)$ 
from the cluster category to the field of fractions of the cluster algebra that maps (isoclasses of reachable) indecomposable rigid objects in 
$\calc$
bijectively to cluster variables in $\cala(Q)$, see \cite{CC,CK,CK2,Palu,CKLP,FK}. The key for the relation to classical friezes lies in the image of  Auslander-Reiten triangles under the cluster character. This is expressed in the following proposition, which is a special case of 
\cite[Proposition~2.3(a)]{DG}. For convenience of the reader, we include a proof here. 

\begin{proposition}\label{prop 2.2} Let $Q$ be an acyclic quiver.
 If $\tau N \to \oplus_{i\in I} M_i \to N\to \tau N[1]$ is an Auslander-Reiten triangle in the transjective component of $\calc_Q$ with $\tau N, M_i, N$ indecomposable rigid objects then we have the following identity in the cluster algebra.
 \[ X_{\tau N}\, X_{N} = \prod_{i\in I} X_{M_i} +1.\]
\end{proposition}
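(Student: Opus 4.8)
The plan is to derive the identity from the Caldero--Keller cluster multiplication formula. Since $Q$ is acyclic, the cluster character is multiplicative on direct sums, so $X_{\oplus_{i\in I}M_i}=\prod_{i\in I}X_{M_i}$, and all the objects that occur are reachable indecomposable rigid objects of the transjective component, whose cluster characters are cluster variables; hence it suffices to prove $X_{\tau N}\,X_N=X_{\oplus_{i\in I}M_i}+1$, and this identity does take place inside $\cala(Q)$.

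First I would check that $\Ext^1_{\calc_Q}(N,\tau N)$ and $\Ext^1_{\calc_Q}(\tau N,N)$ are both one-dimensional over $k$. Since $N$ is an indecomposable rigid object of the transjective component of an acyclic cluster category it is a brick, so $\mathrm{End}_{\calc_Q}(N)=k$; using $\tau\cong[1]$ in $\calc_Q$ one gets $\Ext^1_{\calc_Q}(\tau N,N)=\Hom_{\calc_Q}(\tau N,N[1])\cong\Hom_{\calc_Q}(N[1],N[1])=\mathrm{End}_{\calc_Q}(N)=k$, and then by the $2$-Calabi--Yau property $\Ext^1_{\calc_Q}(N,\tau N)\cong D\,\Ext^1_{\calc_Q}(\tau N,N)=k$ as well. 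Moreover the connecting morphism $N\to\tau N[1]$ of the given Auslander--Reiten triangle is nonzero, hence spans $\Ext^1_{\calc_Q}(N,\tau N)$.

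Next I would invoke the multiplication formula: when $\dim_k\Ext^1_{\calc_Q}(L,M)=1$, the two non-split triangles $M\to B\to L\to M[1]$ and $L\to B'\to M\to L[1]$ satisfy $X_L\,X_M=X_B+X_{B'}$. Apply it with $L=N$ and $M=\tau N$. Because $\Ext^1_{\calc_Q}(N,\tau N)$ is one-dimensional, the non-split triangle $\tau N\to B\to N\to\tau N[1]$ is unique up to isomorphism and therefore coincides with the given Auslander--Reiten triangle, so $B=\oplus_{i\in I}M_i$ and $X_B=\prod_{i\in I}X_{M_i}$. For the other triangle $N\to B'\to\tau N\to N[1]$, the connecting morphism lies in $\Hom_{\calc_Q}(\tau N,N[1])\cong\mathrm{End}_{\calc_Q}(N)=k$ (again via $\tau\cong[1]$) and is nonzero, hence an isomorphism; a distinguished triangle whose connecting morphism is an isomorphism has zero middle term, so $B'=0$ and $X_{B'}=X_0=1$. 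Combining the two computations gives $X_{\tau N}\,X_N=\prod_{i\in I}X_{M_i}+1$.

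The part that needs care is not any computation but marshalling the structural facts about $\calc_Q$ that make the argument run: that $\mathrm{End}_{\calc_Q}(N)=k$ and that $\tau\cong[1]$, which together underlie both the one-dimensionality needed to apply the multiplication formula and the vanishing $B'=0$; both hold precisely because we are in the transjective component of an acyclic cluster category. If one prefers to avoid quoting the multiplication formula, an alternative closer to the original approach of Caldero and Chapoton is to lift the Auslander--Reiten triangle to an almost split sequence of $kQ$-modules (up to a shift) and to apply the Caldero--Chapoton cluster character formula directly; the summand $1$ then appears as the Euler-form contribution of the almost split sequence.
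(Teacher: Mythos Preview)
Your proof is correct and follows essentially the same route as the paper: both arguments use that $\dim\Ext^1_{\calc_Q}(N,\tau N)=1$ so that $N$ and $\tau N$ form an exchange pair, apply the multiplication/exchange formula $X_{\tau N}X_N=X_B+X_{B'}$, and then use $\tau\cong[1]$ to identify the second non-split triangle with $N\to 0\to N[1]\xrightarrow{\sim}N[1]$, whence $X_{B'}=1$. You spell out more of the structural facts (why $\End_{\calc_Q}(N)=k$ and hence why the nonzero connecting morphism $\tau N\to N[1]$ is an isomorphism), while the paper cites \cite[Theorem~7.5]{BMRRT} for the exchange-pair step and simply writes down the triangle $N\to 0\to N[1]\stackrel{1}{\to}N[1]$.
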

\begin{proof} Since $N$ is rigid transjective, we have $\dim \Ext^1(N,\tau N)=1$ and therefore $N$ and $\tau N$ form an exchange pair
 \cite[Theorem 7.5]{BMRRT}. This implies that there are unique (up to isomorphism) triangles 
 \[\tau N \to \oplus_{i\in I} M_i \to N\to \tau N[1]\quad  \textup{and} \quad N \to \oplus_{i\in I'} M'_i \to \tau N\to  N[1]\] such that $X_{\tau N} \,X_N = \prod_{i\in I} X_{M_i} +\prod_{i\in I'} X_{M'_i}$. 
 Now, in the cluster category, we have $\tau=[1]$, and thus the second triangle is isomorphic to  $N \to 0\to  N[1]\stackrel{1}{\to}  N[1]$. This completes the proof.
\end{proof}
 
\begin{remark}
(1) This proposition gives the so-called diamond relation in the friezes.

(2) If we were considering cluster algebras with non-trivial coefficients the constant 1 on the right hand side of the equation in Proposition \ref{prop 2.2} would be replaced by a coefficient monomial. Friezes of that type were studied in \cite{BRM}.
\end{remark}

\subsection{Examples}\label{sect 2.3}
(1) The identity homomorphism $\cala(Q)\to\cala(Q)$ is a non-zero frieze of type $Q$.  For example, if $Q$ is the type $\mathbb{A}_3$ quiver $1\to 2\leftarrow 3$, we can visualize this frieze in the Auslander-Reiten quiver of $\calc_Q$ as follows.
First let us write down the Auslander-Reiten quiver. 

\[\xymatrix@!@R10pt@C10pt{ 
& {\begin{smallmatrix} 3\\2 \end{smallmatrix}}{\scriptstyle[1]} \ar[rd] &&
 {\begin{smallmatrix} 3\\2 \end{smallmatrix}} \ar[rd] &&
 {\begin{smallmatrix} 1 \end{smallmatrix}} \ar[rd] &&
 {\begin{smallmatrix} 1\\2 \end{smallmatrix}}{\scriptstyle[1]} \\
 {\begin{smallmatrix} 2 \end{smallmatrix}}{\scriptstyle[1]} \ar[rd]\ar[ru] &&
 {\begin{smallmatrix} 2 \end{smallmatrix}} \ar[rd]\ar[ru] &&
 {\begin{smallmatrix} 1\ 3\\2 \end{smallmatrix}} \ar[rd]\ar[ru] &&
 {\begin{smallmatrix} 2 \end{smallmatrix}}{\scriptstyle[1]} \ar[rd]\ar[ru] &&
\\
& {\begin{smallmatrix} 1\\2 \end{smallmatrix}}{\scriptstyle[1]} \ar[ru] &&
 {\begin{smallmatrix} 1\\2 \end{smallmatrix}} \ar[ru] &&
 {\begin{smallmatrix} 3 \end{smallmatrix}} \ar[ru] &&
 {\begin{smallmatrix} 3\\2 \end{smallmatrix}}{\scriptstyle[1]} \\
}
\] 
Here we use a standard notation for the representations of the quiver $Q$, see for example \cite{Schiffler}, and $[1]$ denotes the shift. Vertices with the same label are identified, so the quiver lies on a Moebius strip.  The Auslander-Reiten translation $\tau$ is the horizontal translation to the left. For example $\tau {\begin{smallmatrix} 3 \end{smallmatrix}} ={\begin{smallmatrix} 1\\2 \end{smallmatrix}}$. The Auslander-Reiten triangles are given by the meshes in the Auslander-Reiten quiver, for example
\[\to {\begin{smallmatrix} 1\\2 \end{smallmatrix}}{\scriptstyle[1]}  \to {\begin{smallmatrix} 2 \end{smallmatrix}} \to {\begin{smallmatrix} 1\\2 \end{smallmatrix}} \to 
\qquad \textup{and} \qquad 
\to {\begin{smallmatrix} 2 \end{smallmatrix}} \to {\begin{smallmatrix} 1\\2 \end{smallmatrix}} \oplus {\begin{smallmatrix} 3\\2\end{smallmatrix}} \to {\begin{smallmatrix} 1\ 3\\2 \end{smallmatrix}} \to 
\]
are Auslander-Reiten triangles.

The identity homomorphism $\cala(Q)\to\cala(Q)$ gives the following frieze.
\[\scalebox{0.9}{
\xymatrix@R-10pt@C-10pt{ 
&\zehn x_3\zehn  \ar[rd] &&
 \frac{x_1x_3+1+x_2}{x_2x_3} \ar[rd] &&
 \frac{x_2+1}{x_1} \ar[rd] &&
 \zehn x_1
 \zehn \\
\fuenf x_2\fuenf \ar[rd]\ar[ru] &&
 \frac{x_1x_3+1}{x_2} \ar[rd]\ar[ru] &&
\fuenfm \fuenfm \frac{x_2^2+2x_2+1+x_1x_3}{x_1x_2x_3}\fuenfm\fuenfm \ar[rd]\ar[ru] &&
 \fuenf x_2\fuenf  \ar[rd]\ar[ru] &&
\\
& x_1 \ar[ru] &&
  \frac{x_1x_3+1+x_2}{x_1x_2} \ar[ru] &&
  \frac{x_2+1}{x_3} \ar[ru] &&
 x_3 \\
} 
}
\]

This is an example of a non-zero frieze of type $\mathbb{A}_3$. Notice that the Auslander-Reiten triangles give the usual diamond rules, for example 
\[x_1 
  \ \frac{x_1x_3+1+x_2}{x_1x_2} = \frac{x_1x_3+1}{x_2} \ +\ 1 
\qquad \textup{and} \qquad \]
\[\frac{x_1x_3+1}{x_2}\ \frac{x_2^2+2x_2+1+x_1x_3}{x_1x_2x_3}  
\ =\  \frac{x_1x_3+1+x_2}{x_1x_2} \  \frac{x_1x_3+1+x_2}{x_2x_3}  \ + \ 1
\]

\noindent (2) Specializations. We compute several specializations of the example above.

(i) Specializing $x_1=x_2=x_3=1$, we obtain the following unitary positive integral frieze. 
\[\xymatrix{ 
& 1 \ar[rd] &&
3 \ar[rd] &&
2 \ar[rd] &&
1  &&
\\
1 \ar[rd]\ar[ru] &&
2 \ar[rd]\ar[ru] &&
5 \ar[rd]\ar[ru] &&
1 \ar[rd]\ar[ru] &&
 \\
&
1\ar[ru] &&
3\ar[ru] &&
2\ar[ru] &&
1 &&
}
\] 
Here the previous examples of the diamond rules become simply
\[1\cdot 3 = 2+1 \qquad \textup{and} \qquad 2\cdot 5 =3\cdot 3+1.\]
This is an example of a classical  Conway-Coxeter frieze; let us point out that one can extend this frieze pattern by a row of 1's above and below the current pattern, which is how the Conway-Coxeter friezes are usually represented. We will not include these rows of 1's in this article. 

(ii) Specializing $x_1=x_2=1$ and $x_3=-1$, we obtain the following unitary  integral frieze which is non-positive, not even non-zero. 
\[\xymatrix{ 
& -1 \ar[rd] &&
-1\ar[rd] &&
2 \ar[rd] &&
1  &&
\\
1 \ar[rd]\ar[ru] &&
0 \ar[rd]\ar[ru] &&
-3 \ar[rd]\ar[ru] &&
1 \ar[rd]\ar[ru] &&
 \\
&
1\ar[ru] &&
1\ar[ru] &&
-2\ar[ru] &&
-1 &&
}
\]
Our example diamond relations become here $1\cdot 1 =0+1$ and $0\cdot(-3)=(-1)\cdot 1 +1$.

(iii) Specializing $x_1=1$, $x_2=i$, and $x_3=i$, we obtain the following unitary non-zero frieze in the Gaussian integers $\mathbb{Z}[i]$. 
\[\xymatrix{ 
& \fuenf i \fuenf \ar[rd] &&
 \fuenfm-1-2i \fuenfm \ar[rd] &&
1+i \ar[rd] &&
 \fuenf1  \fuenf &&
\\
i \ar[rd]\ar[ru] &&
1-i \ar[rd]\ar[ru] &&
-3i \ar[rd]\ar[ru] &&
i \ar[rd]\ar[ru] &&
 \\
&
1\ar[ru] &&
2-i\ar[ru] &&
1-i\ar[ru] &&
i &&
}
\] 
Here our example diamond relations become $1\cdot (2-i) =(1-i)+1$ and $(1-i)\cdot(-3i)=(-1-2i)\cdot (2-i) +1$.

(iv) Specializing $x_1=1$, $x_2=\frac{1+\sqrt{-3}}{2}$, $x_3=1$, we obtain the following unitary non-zero frieze in the quadratic integer ring $\mathbb{Z}[\sqrt{-3}]$. Recall that the units in this ring are $\{\pm 1,\frac{\pm1 \pm\sqrt{-3}}{2}\}$. 
\[\scalebox{0.85}{\xymatrix{ 
& 1 \ar[rd] &&
\scriptstyle 2-\sqrt{-3} \ar[rd] &&
\frac{3+\sqrt{-3}}{2} \ar[rd] &&
1  &&
\\
\frac{1+\sqrt{-3}}{2} \ar[rd]\ar[ru] &&
\scriptstyle 1-\sqrt{-3} \ar[rd]\ar[ru] &&
\frac{7-\sqrt{-3}}{2} \ar[rd]\ar[ru] &&
\frac{1+\sqrt{-3}}{2} \ar[rd]\ar[ru] &&
 \\ 
& 1 \ar[ru] &&
\scriptstyle 2-\sqrt{-3} \ar[ru] &&
\frac{3+\sqrt{-3}}{2} \ar[ru] &&
1  &&
}}
\] 
In this case, the examples of the diamond relations become $1\cdot (2-\sqrt{-3} )= 1-\sqrt{-3} $ and $ (1-\sqrt{-3})(\frac{7-\sqrt{-3}}{2}   ) =(2-\sqrt{-3} ) ^2+1$.

\subsection{Positive unitary integral friezes}\label{sect 2.4} In this subsection we show that for a positive unitary integral frieze, the cluster that carries the unitarity property is unique.
\begin{proposition}
 \label{prop 4}
 Let $\calf\colon\cala(Q)\to \mathbb{Z}$ be a positive unitary integral frieze and let $\xx$ be a cluster such that $\calf(\xx)=(1,\ldots,1)$. Then for all cluster variables $u\notin \xx$ we have $\calf(u)>1$. In particular $\xx$ is the unique cluster such that $\calf(\xx)=(1,\ldots,1)$.
\end{proposition}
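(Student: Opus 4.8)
The plan is to read off $\calf(u)$ from the Laurent expansion of $u$ in the cluster $\xx$ and to determine exactly when this value can be $1$. Write the reduced Laurent expansion
\[
  u \;=\; \frac{N_u(x_1,\dots,x_n)}{x_1^{d_1}\cdots x_n^{d_n}},
\]
where $N_u\in\ZZ[x_1,\dots,x_n]$ is not divisible by any $x_i$ and $(d_1,\dots,d_n)$ is the denominator vector of $u$ relative to $\xx$; thus $N_u=1$ when $u\in\xx$. By the positivity theorem for cluster algebras (Lee--Schiffler, in the skew-symmetric case, which covers the situation at hand), $N_u$ has positive integer coefficients. Separating $(d_1,\dots,d_n)$ into positive and negative parts and clearing denominators gives the identity
\[
  u\cdot\prod_{i} x_i^{\max(d_i,0)}\;=\;N_u(x_1,\dots,x_n)\cdot\prod_{i} x_i^{\max(-d_i,0)}
\]
in $\cala(Q)$, and applying the ring homomorphism $\calf$ with $\calf(x_i)=1$ for all $i$ turns every monomial factor into $1$; hence $\calf(u)=N_u(1,\dots,1)$, the sum of the coefficients of $N_u$.

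Since $N_u$ is a nonzero polynomial with non-negative integer coefficients, $\calf(u)=N_u(1,\dots,1)\ge 1$, with equality if and only if $N_u$ is a single monomial of coefficient $1$; and since no $x_i$ divides $N_u$, this forces $N_u=1$, i.e.\ $u=x_1^{-d_1}\cdots x_n^{-d_n}$ is a Laurent monomial in the initial cluster, equivalently its $F$-polynomial relative to $\xx$ equals $1$. At this point I would invoke the well-known fact that a cluster variable whose $F$-polynomial relative to $\xx$ is trivial (equivalently, which is a Laurent monomial in $\xx$) must be one of $x_1,\dots,x_n$; this can be extracted from Fomin--Zelevinsky's theory of $F$-polynomials, or from the injectivity of the $g$-vector parametrization of cluster variables together with $g_{x_i}=e_i$. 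Consequently $u\notin\xx$ implies $N_u\ne 1$, hence $\calf(u)=N_u(1,\dots,1)\ge 2>1$, as claimed. The uniqueness assertion then follows immediately: any cluster $\xx'$ with $\calf(\xx')=(1,\dots,1)$ consists of cluster variables of $\calf$-value $1$, each of which therefore lies in $\xx$, and since $|\xx'|=|\xx|=n$ we conclude $\xx'=\xx$.

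The two nonformal ingredients are the positivity of $N_u$ and the fact that a Laurent-monomial cluster variable is an initial variable, the latter being where the argument really relies on the general theory; I would flag this as the main point. It is instructive to see why an elementary argument does not suffice: if one inducts on the distance of $u$ from $\xx$ in the exchange graph and writes $u$ as the cluster variable introduced by a single mutation $\mu_k$ at a cluster $(y_1',\dots,y_n')\not\ni u$ chosen as close to $\xx$ as possible, the exchange relation gives $\calf(u)\,\calf(y_k')=\calf(M)+\calf(M')\ge 2$ with $M,M'$ monomials in the remaining $y_j'$, which only forces $\calf(u)\ge 2$ when $\calf(y_k')=1$; and when $u$ lies far from $\xx$ there need be no exchange of $u$ against a variable of value $1$, so the induction stalls — which is exactly why a global input such as positivity is unavoidable. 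Alternatively, in the spirit of the categorical setup of Section~\ref{sect 2}, one may argue via the Caldero--Chapoton--Palu cluster character: writing $u=X_M$ with $M\in\calc_Q$ the corresponding indecomposable rigid object and $T$ the cluster-tilting object of $\xx$, one has $\calf(u)=\sum_{\mathbf e}\chi(\mathrm{Gr}_{\mathbf e}\,\Ext^1_{\calc_Q}(T,M))$ once $\calf(x_i)=1$, where the sum runs over dimension vectors of the $\mathrm{End}_{\calc_Q}(T)$-module $\Ext^1_{\calc_Q}(T,M)$; this module is nonzero precisely when $M\notin\mathrm{add}\,T$, i.e.\ when $u\notin\xx$, and then the terms $\mathbf e=0$ and $\mathbf e=\underline{\dim}\,\Ext^1_{\calc_Q}(T,M)$ already contribute $1+1$, giving $\calf(u)\ge 2$ (the remaining Euler characteristics being non-negative, again by positivity).
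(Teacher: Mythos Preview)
Your proposal is correct, and in fact your alternative argument via the cluster character---computing $\calf(u)=\sum_{\mathbf e}\chi(\mathrm{Gr}_{\mathbf e}\,\Ext^1_{\calc_Q}(T,M))$ and noting that a nonzero module contributes at least the two terms from $\mathbf e=0$ and $\mathbf e=\underline{\dim}$---is precisely the paper's proof. Your primary route diverges only at the very last step: after reducing (via positivity) to the statement that a cluster variable which is a Laurent monomial in $\xx$ must lie in $\xx$, you appeal to $F$-polynomial\,/\,$g$-vector theory, whereas the paper extracts this directly from the cluster-character formula; since the $F$-polynomial facts you cite are themselves typically proved through the categorical picture, the two justifications are close cousins rather than genuinely different arguments.
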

\begin{proof} Suppose $\calf(u)=1$.
 Since $u$ is a Laurent polynomial in $\xx$ with positive coefficients due to  \cite{LS4}, this implies that $u$ is a Laurent monomial in $\xx$.  
 By \cite[Lemma 3.7]{CKLP}, it follows  that $u$ is in $\mathbf{x}$. 
\end{proof}

\section{Frieze vectors}\label{sect 3}
In this section, we introduce a class of positive integer vectors and show that they are in bijection with the clusters of the cluster algebra. 

\subsection{Definition}\label{sect 3.1}
We start with a general result on non-zero friezes.
\begin{proposition}\label{prop 1}
 Every non-zero frieze $\calf\colon \cala(Q)\to R$ is completely determined by its values on an arbitrary cluster in $\cala(Q)$.
\end{proposition}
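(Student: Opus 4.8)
The plan is to show that if two non-zero friezes $\calf,\calg\colon\cala(Q)\to R$ agree on a cluster $\xx=(x_1,\ldots,x_n)$, then they agree on every cluster variable, hence on all of $\cala(Q)$ since the cluster variables generate the cluster algebra as a ring. The natural tool is induction on the distance in the exchange graph from the seed containing $\xx$: a cluster variable $u$ that is not in $\xx$ is obtained from a cluster that is one mutation closer to $\xx$ by an exchange relation of the form $u\,u' = M_+ + M_-$, where $u'$ is a cluster variable, and $M_+,M_-$ are monomials in cluster variables strictly closer to $\xx$. By the induction hypothesis $\calf$ and $\calg$ agree on $u'$, $M_+$, $M_-$; hence $\calf(u)\calf(u') = \calg(u)\calg(u')$ with $\calf(u')=\calg(u')\neq 0$ (here we use that the frieze is non-zero and that $R$ is an integral domain), so $\calf(u)=\calg(u)$.

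The one point that needs care is the base of the induction and the very first mutation step: a cluster variable $u$ obtained by a single mutation $\mu_k$ from $\xx$ satisfies $x_k\,u = M_+ + M_-$ where $M_+,M_-$ are monomials in $x_1,\ldots,\hat{x_k},\ldots,x_n$ only, so the right-hand side is already determined by the values on $\xx$, and $\calf(x_k)=\calg(x_k)\neq 0$ forces $\calf(u)=\calg(u)$. One must also phrase the induction so that ``strictly closer to $\xx$'' is well-defined; I would fix a spanning tree of the exchange graph rooted at the seed of $\xx$ (equivalently, use the distance function on the exchange graph) and do induction on that distance, noting that the exchange relation producing $u$ at distance $d$ involves only cluster variables lying in seeds at distance $\le d-1$ — this is exactly the statement that an exchange relation $\mu_k$ applied to a seed $\cals$ expresses the new variable in terms of the variables of $\cals$, and $\cals$ can be taken at distance $d-1$.

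I expect the main (really the only) obstacle to be bookkeeping rather than mathematics: making precise the claim that every cluster variable is reached by a chain of exchange relations in which each step only refers to previously reached variables, and being careful that the constant term $1$ on the right-hand side of the exchange relations (Proposition~\ref{prop 2.2} in the acyclic case, and the analogous exchange relation in general) maps to $\calf(1)=\calg(1)=1$, which holds because $\calf,\calg$ are ring homomorphisms. Once the inductive framework is set up, each step is a one-line cancellation in the integral domain $R$ using the non-vanishing hypothesis. Finally, since the cluster variables generate $\cala(Q)$ as a $\ZZ$-algebra and $\calf,\calg$ are ring homomorphisms agreeing on all generators, they coincide, which proves that a non-zero frieze is determined by its values on a single cluster.
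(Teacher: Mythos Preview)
Your proof is correct but follows a different path from the paper. The paper invokes the Laurent phenomenon directly: any cluster variable $u$ can be written as $f(x_1,\ldots,x_n)/(x_1^{d_1}\cdots x_n^{d_n})$ with $f\in\ZZ[x_1,\ldots,x_n]$, and since $\calf$ is a ring homomorphism with $\calf(x_i)\ne 0$, one evaluates this expression in the fraction field of $R$ to obtain $\calf(u)$; this already determines $\calf(u)$ from the values $\calf(x_1),\ldots,\calf(x_n)$. Your argument instead propagates along the exchange graph by induction on distance, using only the exchange relations and cancellation in the integral domain $R$. The paper's route is shorter and immediately gives a closed formula for $\calf(u)$ in terms of the $\calf(x_i)$, at the cost of citing the (nontrivial) Laurent phenomenon. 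Your route avoids that citation entirely and uses only the definition of mutation and the connectedness of the exchange graph, which makes it more elementary and self-contained; the trade-off is the bookkeeping you correctly flag, and the absence of an explicit formula.
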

\begin{proof}
 Let $\xx=(x_1,\cdots,x_n)$ be a cluster in $\cala(Q)$ and let $u$ be an arbitrary cluster variable in $\cala(Q)$ that does not lie in $\xx$. By the Laurent phenomenon \cite{FZ}, we can write $u$ as a Laurent polynomial in $x_1,\ldots,x_n$, thus 
 \[ u= \frac{f(x_1,\ldots,x_n)}{x_1^{d_1}\cdots x_n^{d_n}} \qquad \textup{with } f\in \mathbb{Z}[x_1,\ldots,x_n], d_i\ge 0.\]
Thus \[\calf(u) = \frac{f(\calf(x_1),\ldots,\calf(x_n))}{\calf(x_1)^{d_1}\cdots \calf(x_n)^{d_n}}\]
in the field of fractions of $R$. Note that this expression is well-defined since the frieze is non-zero. Therefore  $\calf(u)$ is determined by the values $\calf(x_i)$. Since the cluster algebra is generated by its cluster variables, this completes the proof.
\end{proof}

Proposition \ref{prop 1} implies that given an arbitrary cluster $\xx=(x_1,\ldots,x_n)$ we can obtain \emph{every} non-zero frieze by specializing the cluster variables $x_i$ of the cluster to certain ring elements $\calf(x_i)=a_i\in R$.
It is important to note that by far not every choice of elements $a_i\in R$ will produce a frieze with values in $R$, because  in general the values will be in the field of fractions of $R$. It is natural to ask which choices $a_i\in R$ do. This leads us to the following definition.

\begin{definition} Let $\xx=(x_1,\ldots,x_n)$ be a cluster of $\cala(Q)$.

 (1) A vector $(a_1,\ldots,a_n) \in R_{\neq0}^n$ is called a \emph{frieze vector relative to $\xx$} if the frieze $\calf$ defined by $\calf(x_i)=a_i$ has values in $R$. If the frieze $\calf$ is unitary we say that the frieze vector  $(a_1,\ldots,a_n)$ is \emph{unitary}.
 
 (2) A vector $(a_1,\ldots,a_n) \in \mathbb{Z}_{>0}^n$ is called a \emph{positive frieze vector relative to $\xx$} if the frieze $\calf$ defined by $\calf(x_i)=a_i$ is positive integral.
\end{definition}

\begin{proposition}
 \label{prop 2}
{\rm (1)} Let $(a_1,\ldots,a_n)\in R^n$ such that every $a_i$ is a unit in $R$. Then $(a_1,\ldots,a_n)$ is a (unitary) frieze vector relative to every cluster $\xx=(x_1,\ldots,x_n)$ in $\cala(Q)$.

{\rm (2)} The vector $(1,\ldots,1)\in \mathbb{Z}_{>0}^n$  is a positive (unitary) frieze vector relative to every cluster $\xx=(x_1,\ldots,x_n)$ in $\cala(Q)$. 
\end{proposition}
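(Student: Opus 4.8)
The plan is to realize every such frieze as the restriction of an evaluation homomorphism on a Laurent polynomial ring, using the Laurent phenomenon, and then to read off positivity from the Laurent positivity theorem.

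For part (1), I would fix a cluster $\xx=(x_1,\ldots,x_n)$ and use the Laurent phenomenon \cite{FZ} to obtain an inclusion of rings $\cala(Q)\hookrightarrow L_\xx$, where $L_\xx:=\ZZ[x_1^{\pm1},\ldots,x_n^{\pm1}]$: indeed every cluster variable is an integer Laurent polynomial in $x_1,\ldots,x_n$, and the cluster variables generate $\cala(Q)$ as a ring. Since each $a_i$ is a unit in $R$, evaluation at $(a_1,\ldots,a_n)$ extends from $\ZZ[x_1,\ldots,x_n]$ to a ring homomorphism $\Phi\colon L_\xx\to R$ with $\Phi(x_i)=a_i$ --- this is the one place where it is essential that the $a_i$ be units, so that $x_i^{-1}\mapsto a_i^{-1}\in R$ makes sense. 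Restricting gives a ring homomorphism $\calf:=\Phi|_{\cala(Q)}\colon\cala(Q)\to R$ with $\calf(x_i)=a_i$, i.e.\ a frieze of type $Q$ whose values lie in $R$ (not merely in $\operatorname{Frac}R$); hence $(a_1,\ldots,a_n)$ is a frieze vector relative to $\xx$, and since $\xx$ was arbitrary this holds relative to every cluster. Unitarity is then immediate: $\calf(x)=a_i$ is a unit in $R$ for every $x=x_i\in\xx$, so the cluster $\xx$ itself witnesses that $\calf$ is unitary, and $(a_1,\ldots,a_n)$ is a unitary frieze vector.

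For part (2), I would specialize part (1) to $R=\ZZ$ and $a_i=1$ (a unit of $\ZZ$), which already yields that $(1,\ldots,1)$ is a unitary frieze vector relative to every cluster, realized by $\calf\colon\cala(Q)\to\ZZ$, $x_i\mapsto1$. What remains is to check that this $\calf$ is \emph{positive} integral. For an arbitrary cluster variable $u$, the positivity theorem \cite{LS4} gives $u=f(x_1,\ldots,x_n)/(x_1^{d_1}\cdots x_n^{d_n})$ with $f\in\ZZ_{\ge0}[x_1,\ldots,x_n]$ and $d_i\ge0$; since $u\neq0$, the polynomial $f$ is nonzero and hence has at least one monomial with strictly positive coefficient, so $f(1,\ldots,1)\ge1$ and therefore $\calf(u)=f(1,\ldots,1)>0$. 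Thus $\calf$ maps every cluster variable to a positive integer, so $(1,\ldots,1)$ is a positive (unitary) frieze vector relative to $\xx$, for every cluster $\xx$.

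I do not expect a serious obstacle: the argument is essentially a matter of packaging. The one subtlety that must not be glossed over is that specializing the $x_i$ to units genuinely defines a \emph{ring homomorphism} out of $\cala(Q)$, rather than merely a consistent family of values on the cluster variables --- which is exactly why the proof factors through the inclusion $\cala(Q)\hookrightarrow L_\xx$ supplied by the Laurent phenomenon --- and, for part (2), that one invokes the Laurent positivity theorem to guarantee $f(1,\ldots,1)>0$. For acyclic $Q$ one could instead prove part (2) by induction along the transjective component via Proposition \ref{prop 2.2}, but going through positivity gives a single argument valid for all $Q$.
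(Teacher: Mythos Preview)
Your proposal is correct and follows essentially the same approach as the paper: both arguments use the Laurent phenomenon for part (1) and the positivity theorem \cite{LS4} for part (2). Your version is somewhat more explicit about why the specialization genuinely defines a ring homomorphism (by factoring through the Laurent polynomial ring $L_\xx$), whereas the paper's proof simply observes that the denominator of each Laurent expansion specializes to a unit and concludes directly; this extra care is fine but not strictly necessary at the level of detail the paper adopts.
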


\begin{proof}
 (1) By the Laurent phenomenon, every cluster variable is a Laurent polynomial in $\xx$. Since each $x_i$ is specialized to a unit in $R$, the denominator of this Laurent polynomial also specializes to  a unit in $R$. Therefore the image of every cluster variable lies in $R$, and hence $\calf(\cala(Q))\subset R$. 
 
 (2) The frieze is integral by part (1) and positivity follows from the positivity theorem for cluster variables \cite{LS4}.
\end{proof}

\subsection{Acyclic type}
In the case where the quiver $Q$ is mutation equivalent to an acyclic quiver, we have the following characterization of frieze vectors. 
\begin{proposition}
 \label{prop 3}
 Let $(\xx=(x_1,\ldots,x_n),Q)$ be an acyclic seed of the cluster algebra. Then a vector $(a_1,\ldots,a_n)\in R^n$ is a frieze vector relative to $\xx$ if and only if $\textup{$a_i$ is a divisor of } \prod_{i\to j} a_j +\prod_{i\leftarrow j} a_j$
 in $R$, for all $i=1,\ldots,n$.
\end{proposition}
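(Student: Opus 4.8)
The plan is to reduce the statement to the behaviour of a single mutation at each vertex $i$, using the exchange relations available in an acyclic seed. The key point is that in an acyclic seed $(\xx,Q)$, mutating at vertex $i$ produces the cluster variable $x_i'$ satisfying the exchange relation
\[
x_i\,x_i' = \prod_{i\to j} x_j + \prod_{i\leftarrow j} x_j,
\]
since the quiver $Q$ is acyclic at $i$ (no oriented paths through $i$ contribute extra monomials). So I would first observe that if $(a_1,\ldots,a_n)$ is a frieze vector relative to $\xx$, then applying the induced frieze $\calf$ to this relation gives $a_i\,\calf(x_i') = \prod_{i\to j}a_j + \prod_{i\leftarrow j}a_j$, and since $\calf(x_i')\in R$ by hypothesis, $a_i$ divides the right-hand side. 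This gives the ``only if'' direction immediately, for every $i$.

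For the ``if'' direction, suppose $a_i \mid \prod_{i\to j}a_j + \prod_{i\leftarrow j}a_j$ in $R$ for all $i$. I want to show the frieze $\calf$ determined by $\calf(x_i)=a_i$ has all its values in $R$. The natural approach is induction on the distance (number of mutations) from the initial cluster $\xx$ in the exchange graph, reducing to the following one-step claim: if $\xx' = \mu_k(\xx'')$ and $\calf$ already takes values in $R$ on the cluster $\xx''$ \emph{and} the divisibility condition holds at the seed $(\xx'',Q'')$, then $\calf$ takes values in $R$ on $\xx'$ as well, and moreover the divisibility conditions propagate to the new seed $(\xx',Q')$. The value part is handled by the exchange relation at vertex $k$ exactly as above; the subtlety is that the exchange relation in a \emph{non-acyclic} intermediate seed may have the more general form $x_k x_k' = M_1 + M_2$ with $M_1, M_2$ monomials (no longer simply the products over arrows in/out), but it is still a \emph{binomial} exchange relation, so one still needs $a_k \mid \calf(M_1)+\calf(M_2)$. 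Propagating this requires tracking how the binomial on the right transforms under mutation.

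I expect the main obstacle to be precisely this propagation of the divisibility/integrality condition through a sequence of mutations when intermediate seeds are not acyclic — i.e., showing that once the condition holds at the initial acyclic seed it is automatically maintained at every reachable seed. A cleaner route that sidesteps this is to invoke that the cluster algebra is generated as a ring by the \emph{single} set $\{x_i, x_i' : i = 1,\ldots,n\}$ consisting of the initial cluster variables together with the one-step neighbours $x_i'$ — this is a known feature of acyclic cluster algebras (the exchange relations at the initial seed, together with the fact that every cluster variable is a polynomial in the $x_i^{\pm1}$ and the $x_i'$, or one can cite that $\cala(Q)$ is generated by these $2n$ elements). Granting that, it suffices to check $\calf(x_i)=a_i\in R$ (given) and $\calf(x_i')\in R$; the latter is exactly the divisibility hypothesis via the initial exchange relation. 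Combined with Proposition~\ref{prop 1}, which tells us $\calf$ is determined by the $a_i$, this shows $\calf(\cala(Q))\subseteq R$, completing the ``if'' direction.

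In writing this up I would: (1) state and use the acyclic exchange relation $x_i x_i' = \prod_{i\to j}x_j + \prod_{i\leftarrow j}x_j$; (2) deduce the ``only if'' direction in one line; (3) for ``if'', either carry out the induction on mutation distance with the one-step lemma, or — preferably, if the generation-by-$2n$-elements fact is citable — reduce to checking the $2n$ generators and conclude. The cleanest presentation uses the second option, so the proof stays short and the only real content is the elementary manipulation of the exchange relation under the homomorphism $\calf$.
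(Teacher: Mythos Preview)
Your proposal is correct and converges on exactly the approach the paper takes: cite the fact (from \cite{BFZ}) that for an acyclic seed the cluster algebra is generated by the $2n$ elements $x_1,\ldots,x_n,x_1',\ldots,x_n'$, then observe that $\calf(x_i')\in R$ is equivalent to the stated divisibility condition via the exchange relation. Your invocation of Proposition~\ref{prop 1} is unnecessary --- once the $2n$ generators land in $R$, the ring homomorphism automatically sends all of $\cala(Q)$ into $R$ --- but otherwise your ``clean route'' matches the paper's proof line for line.
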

\begin{proof}
 Let $x_i'$ denote the cluster variable obtained from $(\xx,Q)$ by mutating in direction $i$. Then 
 
\begin{equation}\label{eq xi'}
 x_i'= \frac{\prod_{i\to j} x_j +\prod_{i\leftarrow j} x_j}{x_i}.
\end{equation}
 By \cite[Corollary 1.21]{BFZ}, the cluster algebra is generated by the $2n$ variables $x_1,\ldots,x_n,x_1',\ldots,x_n'$.  
  Let $\calf $ be the homomorphism defined by $\calf(x_i)=a_i$. Then 
 \[\begin{array}{ll}&\calf(\cala(Q))\subset R \\ \ssi& \calf(x_i')\in R \textup{ for each $i$ }\\ 
 \ssi&  a_i \textup{ divides }  \prod_{i\to j} a_j +\prod_{i\leftarrow j} a_j \textup{ in $R$ for all $i$.}\end{array}\]
\end{proof}

The following is a special case of Proposition~\ref{prop 3}.

\begin{example}[{\cite[Problem 24]{ConCox1,ConCox2}}]
Suppose $Q$ is the linearly-oriented type $\mathbb{A}$ quiver $1 \to 2 \to \dots \to n$. 
Then the vector $(a_1,\dots, a_n)\in \mathbb{Z}_{>0}^n$ 
is a (unitary, integral) frieze vector relative to a seed with quiver $Q$ if and only if 
the entry $a_1$ divides $1+a_2$, 
the entry
$a_n$ divides $a_{n-1}+1$, and 
the entry
$a_i$ divides $a_{i-1}+a_{i+1}$ for all $1<i<n$.
\end{example}
\begin{remark}
 In the special case of the linearly oriented type $\mathbb{A}$ quiver, the proposition shows that the frieze vectors are related to arithmetical structures on the path graph \cite{Braun+}.
\end{remark}

\subsection{Main result on frieze vectors}
We are now ready to state and prove our first main result. 

\begin{theorem}
 \label{thm1}
 Let $Q$ be a quiver without loops and 2-cycles and let $\xx=(x_1,\ldots,x_n)$ be an arbitrary cluster of $\cala(Q)$. Then there is a bijection
 \[
\begin{array}{rcl}
 \phi\colon\{\textup{unordered clusters in $\cala(Q)$}\} &\longrightarrow&\left\{\begin{array}{l}\textup{positive unitary frieze } \\ \textup{vectors relative to $\xx$}\end{array}\right\}
 \\
 \xx'=\{x_1',\ldots,x_n'\} &\longmapsto& \phi(\xx')=(a_1,\ldots,a_n).
\end{array}\]
\end{theorem}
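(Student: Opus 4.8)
The plan is to construct $\phi$ explicitly and exhibit a two-sided inverse. Given an unordered cluster $\xx' = \{x_1', \dots, x_n'\}$ in $\cala(Q)$, there is by Proposition~\ref{prop 2}(1) (applied with $R = \ZZ$ and with each unit equal to $1$) a positive integral frieze $\calf_{\xx'}$ determined by $\calf_{\xx'}(x_i') = 1$ for all $i$; since it sends an entire cluster to units it is unitary. We then set $\phi(\xx') := (\calf_{\xx'}(x_1), \dots, \calf_{\xx'}(x_n))$, the vector of values on the \emph{reference} cluster $\xx$. By Proposition~\ref{prop 1} this vector is a positive unitary frieze vector relative to $\xx$, and by Proposition~\ref{prop 1} again it completely determines $\calf_{\xx'}$, so $\phi$ is well-defined on unordered clusters (the labelling of $\xx'$ is irrelevant). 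Here I would note that positivity of the entries of $\phi(\xx')$ is exactly the positivity theorem \cite{LS4}, already invoked in Proposition~\ref{prop 2}.

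For surjectivity: let $(a_1, \dots, a_n)$ be a positive unitary frieze vector relative to $\xx$, so the frieze $\calf$ with $\calf(x_i) = a_i$ is positive integral and unitary. By definition of unitary there is a cluster $\yy = \{y_1, \dots, y_n\}$ with each $\calf(y_j)$ a unit in $\ZZ$, i.e. $\calf(y_j) = \pm 1$; since $\calf$ is positive, in fact $\calf(y_j) = 1$ for all $j$. Thus $\calf = \calf_{\yy}$ by Proposition~\ref{prop 1}, and $\phi(\yy) = (\calf(x_1), \dots, \calf(x_n)) = (a_1, \dots, a_n)$. For injectivity: if $\phi(\xx') = \phi(\xx'')$ then the friezes $\calf_{\xx'}$ and $\calf_{\xx''}$ agree on the cluster $\xx$, hence agree everywhere by Proposition~\ref{prop 1}; call this common frieze $\calf$. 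It is positive unitary, and by Proposition~\ref{prop 4} there is a \emph{unique} cluster sent to $(1, \dots, 1)$ by $\calf$. But both $\xx'$ and $\xx''$ are sent to $(1,\dots,1)$ by construction, so $\xx' = \xx''$ as unordered clusters.

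The only real content beyond assembling the earlier results is making sure the logical circle closes cleanly: $\phi$ is built from the assignment $\xx' \mapsto \calf_{\xx'}$, surjectivity uses that unitarity $+$ positivity forces the distinguished cluster's values to be exactly $1$ (not merely $\pm 1$), and injectivity is precisely the uniqueness statement of Proposition~\ref{prop 4}. I expect the main obstacle — really the only subtle point — to be the step in surjectivity and injectivity where one must know that a positive unitary frieze has a \emph{unique} cluster of $1$'s and that this is the cluster $\calf$ "remembers"; without Proposition~\ref{prop 4} the map $\xx' \mapsto \calf_{\xx'}$ need not be injective and $\phi$ would fail to be. Everything else is a direct application of the Laurent phenomenon and the positivity theorem as packaged in Propositions~\ref{prop 1}, \ref{prop 2}, and \ref{prop 4}.
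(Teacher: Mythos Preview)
Your proof is correct and follows essentially the same route as the paper's: define $\phi(\xx')$ as the values on $\xx$ of the frieze obtained by specializing $\xx'$ to $(1,\ldots,1)$, use Proposition~\ref{prop 2} for well-definedness, then Proposition~\ref{prop 1} and Proposition~\ref{prop 4} for surjectivity and injectivity respectively. Your explicit remark that positivity forces the unit values on the distinguished cluster to be $+1$ rather than $\pm 1$ is a small clarification the paper leaves implicit, but otherwise the arguments coincide.
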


\begin{remark}
 (1) The theorem implies that every cluster $\xx'$ defines a unique positive unitary frieze vector in $\mathbb{Z}_{>0}^n$. This vector is different from the $g$-vector and the $c$-vector of the seed. 
 
 (2) We stress that, while the order of the cluster variables $x_1',\ldots,x_n'$ is irrelevant, the order of the entries of the frieze vector $\phi(\xx')=(a_1,\ldots,a_n)$ is important. In other words, if $\zs$ is a permutation then $\phi(\zs \xx')=\phi(\xx')$, but $\zs\phi(\xx')\ne \phi(\xx')$ in general.
\end{remark}
\begin{proof}
 Each cluster variable $x_1,\ldots,x_n$ in the fixed cluster $\xx$ can be expressed as a Laurent polynomial in the cluster $\xx'$, say $x_i=\call_i(x_1',\ldots,x_n')$. We define the map $\phi$ by $\phi(\xx')=(a_1,\ldots,a_n) $, with $a_i=\call_i(1,\ldots,1)$. In other words, $\phi(\xx')$ is equal to the vector $\calf(\xx)=(a_1,\ldots,a_n)$, where $\calf$ is the frieze defined by specializing the cluster variables in $\xx'$ to 1. By Proposition \ref{prop 2}, the frieze $\calf$ is unitary, integral and positive. Thus $(a_1,\ldots,a_n)$ is a positive unitary frieze vector relative to $\xx$.  Furthermore, since every variable in $\xx'$ is specialized to 1,  we clearly have $\phi(\zs \xx')=\phi(\xx')$, for every permutation $\zs$. Thus the map $\phi $ is well-defined. 
 
 To show that $\phi$ is surjective, let $(a_1,\ldots,a_n)\in \mathbb{Z}_{>0}^n$ be any positive unitary frieze vector relative to $\xx$.  By definition, the corresponding frieze defined by $\calf(x_i)=a_i$ is positive and unitary, which means that there exists a cluster $\xx'=(x_1',\ldots,x_n')$ such that $\calf(x_i')=1$, for  $i=1,\ldots,n$.  By construction of $\phi$, we have $\phi(\xx')=(a_1,\ldots,a_n)$, so $\phi $ is surjective.
 
 To show injectivity, let $\xx',\xx''$ be two clusters in $\cala(Q)$ such that $\phi(\xx')=\phi(\xx'')$. Let $\calf'$ and $\calf''$ be the unitary friezes defined by $\calf'(x_i')=1$ and $\calf''(x_i'')=1$, respectively. Since $\phi(\xx')=\phi(\xx'')$, both friezes have the same values on $\xx$, thus $\calf'(\xx)=\calf''(\xx)=(a_1,\ldots,a_n)$. Now Proposition \ref{prop 1} implies that $\calf'=\calf''$, and Proposition \ref{prop 4} yields $\xx'=\xx''$.
\end{proof}

\begin{remark}
 The inverse of the bijection $\phi$ is given as follows.  Given a positive unitary frieze vector $(a_1,\dots, a_n)$, we compute the corresponding unitary frieze $\calf$ by specializing $(x_1,\dots,x_n)=(a_1,\dots,a_n)$. By Proposition \ref{prop 4}, this frieze has a unique cluster  $\mathbf{x'}$ such that $\calf(\xx')=(1,\ldots,1)$.  Then $\phi^{-1}(a_1,\ldots,a_n)=\xx'$.
\end{remark}

 \subsection{Example} Thanks to Proposition \ref{prop 3}, the positive integral frieze vectors $(a_1,a_2,a_3)$ relative to the seed $(x_1,x_2,x_3),1\to 2 \leftarrow 3$ are characterized by the condition that the following three expressions are integers 
 \[\frac{a_2+1}{a_1},\   \frac{a_1a_3+1}{a_2},\ \frac{a_2+1}{a_3}.\] 
 The 14 frieze vectors $(a_1,a_2,a_3)$ are the following.
 \[
\begin{array}
 {cccccccccccccc} (1,1,1)&(1,1,2)&(1,2,1)&(1,2,3)&(1,3,2) & (2,1,1) & (2,1,2) & (2,3,1)\\(2,3,4)&(2,5,2) & (3,2,1) &(3,2,3)&(3,5,3)&(4,3,2)
\end{array}
 \]
 Equivalently, we can think of the conditions as Diophantine equations in two sets of integers as follows.
  \[a_1b_1=a_2+1,\  a_2b_2=a_1a_3+1,\  a_3b_3={a_2+1}.\] 
  Note that $b_i$ is the number of terms in the cluster variable $x_i'$ obtained from the initial cluster by mutation in direction $i$, see Equation (\ref{eq xi'}).
The vectors $(b_1,b_2,b_3)$, in the same order as the frieze vectors above, are the following.
\[\begin{array}
 {cccccccccccccc} (2,2,2)&(2,3,1)&(3,1,3)&(3,2,1)&(4,1,2) & (1,3,2) & (1,5,1) & (2,1,4)\\(2,3,1)&(3,1,3) & (1,2,3) &(1,5,1)&(2,2,2)&(1,3,2)
\end{array} \]
  Figure \ref{fig 3} shows the frieze vectors and their clusters in the exchange graph, where the clusters are illustrated by their position in the Auslander-Reiten quiver of the cluster category.
 
\begin{figure}[hbpt]
\Large\scalebox{0.5}{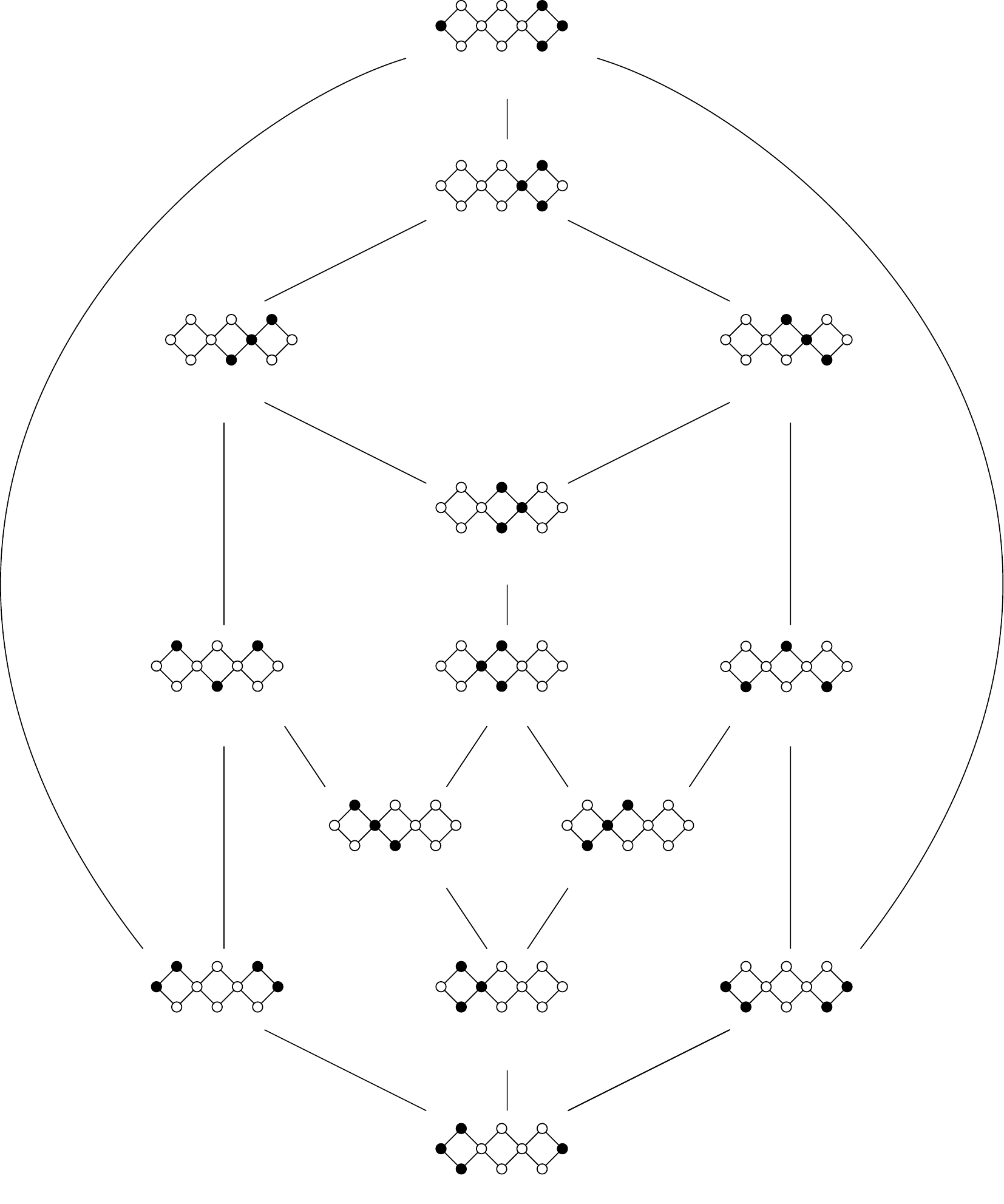}
 \caption{Frieze vectors relative to   $(x_1,x_2,x_3),1\to 2 \leftarrow 3$ together with their clusters.}
 \label{fig 3}
\end{figure}

\subsection{Mutation of frieze vectors in type $\mathbb{A}$} 
Mutations of positive integral friezes are described in  \cite{BFGST, BFGSTsurvey}, where the authors compute the effect of mutation on the whole frieze. Here, we are interested in describing the effect of mutation on the frieze vector relative to a fixed cluster $\xx$. To give this description, we use the combinatorial formula of \cite{MS} to write the cluster variables of $\xx$ with respect to the cluster $\xx'$ in terms of perfect matchings of snake graphs. Then the values in the frieze vectors are simply given as the number of perfect matchings of the appropriate snake graph. 

We will not define snake graphs here but rather refer to the survey \cite{S2}. For our purpose  it suffices to say that a snake graph is a planar graph consisting of a sequence of square tiles that are glued together such that two consecutive tiles share exactly one edge which is either the north edge of the first tile and the south edge of the second tile or 
 the east edge of the first tile and the west edge of the second tile. We associate a snake graph to each cluster variable in $\xx$. The tiles  of the snake graph are labeled by the cluster variables in the cluster $\xx'=(x_1',\ldots,x_n')$ and its edges are labeled by the cluster variables in $\xx'$ or by the constant 1. Since our cluster algebra is of Dynkin type $\mathbb{A}$, no two tiles have the same label and no two interior edges are labeled by the same cluster variable. 
 
 The mutation from $\xx'$ to $\xx''=(\xx'\setminus\{x_i'\})\cup \{x_i''\} $ has the following effect on the snake graphs from a cluster algebra of Dynkin type $\mathbb{A}$. 
 
\begin{enumerate}
\item If the first or last tile of the snake graph has label $x_i'$ then this tile is removed  and the new boundary edge is labeled by the new cluster variable $x_i''$, see the top row of Figure \ref{fig 5}.
Conversely, if the snake graph ends with an edge that is labeled $x_i'$ then a new tile with label $x_i''$ is glued to this edge. 
\item If the snake graph has a tile labeled $x_i'$ that is the middle tile of a 3-tile straight subgraph then  it transforms as shown in the second row  of Figure \ref{fig 5}.
\item If the snake graph has a tile labeled $x_i'$ that is the middle tile of a 3-tile  subgraph that is not straight, then  it transforms as shown in the third row  of Figure \ref{fig 5}.
Conversely, if the snake graph contains an interior edge labeled $x_i'$ shared by two tiles with labels $x_h',x_j'$ then a new tile labeled $x_i''$ is inserted such that the three consecutive tiles labeled $x_h',x_i'',x_j'$ do not form a straight subgraph. 
\end{enumerate}

\begin{figure}[hbpt]
\scalebox{0.8}{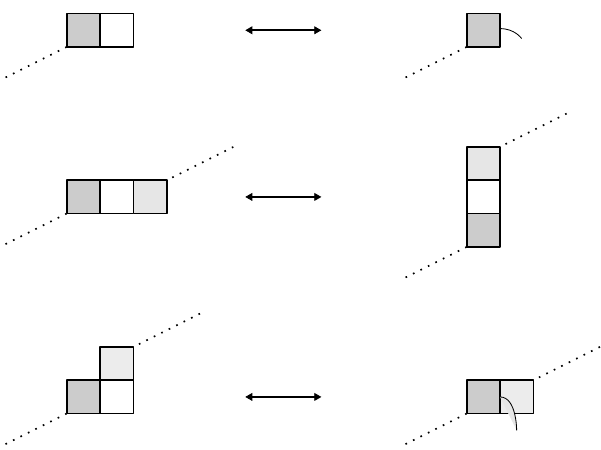}
 \caption{Mutation of snake graphs in direction $x_i'$.}
 \label{fig 5}
\end{figure}

\smallskip
The above description gives the mutations of frieze vectors in the example of Figure \ref{fig 3}. 
For example the mutations   
of frieze vectors in Figure \ref{fig frieze mutation}
are given by the snake graph mutations in Figure~\ref{fig snake graph mutation}. 

\begin{figure}[hbpt]
\begin{tikzpicture}[xscale=1.2, yscale=1.2]
\node (353) at (0,0) {$(3,5,3)$};
\node (234) at (-1,1) {$(2,3,4)$};
\node (432) at (1,1) {$(4,3,2)$};
\node (252) at (0,-1) {$(2,5,2)$};
\path[->, thick] (353) edge[] (252);
\path[->, thick] (353) edge[] (234);
\path[->, thick] (353) edge[] (432);
\end{tikzpicture} 
\caption{Mutations of frieze vectors.}\label{fig frieze mutation}
\end{figure}

\begin{figure}[hbpt]
\centerline{ 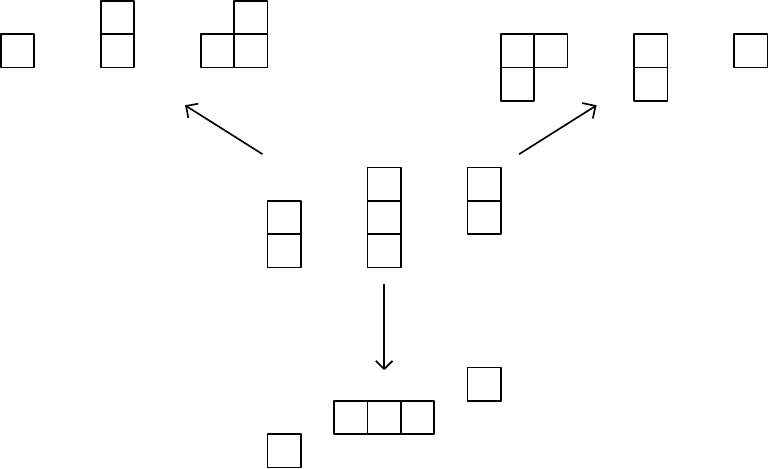}
\caption{Snake graph mutations of the frieze vectors in Figure \ref{fig frieze mutation}.}\label{fig snake graph mutation}
\end{figure}

\section{Friezes of type $\widetilde{\mathbb{A}}$}\label{sect 4}
In this section, we study the special case of integral friezes of affine Dynkin type $\mathbb{A}$. We show that every positive integral frieze of this type is unitary.

Let $Q$ be a quiver that is mutation equivalent to a quiver $Q'$ of type $\widetilde{\mathbb{A}}_{p,q}$. The cluster algebra $\cala(Q)$ is of surface type and the corresponding surface is an annulus with $p$ marked points on one boundary component and $q$ marked points on the other boundary component, see \cite{FST}. The cluster variables $x_\zg$ in $\cala(Q)$ are in bijection with the arcs $\zg$ in the annulus. We call a cluster variable $x_\zg$ \emph{transjective} if its arc $\zg$ has its two endpoints on two different boundary components (bridging arc) and we call the cluster variable $x_\zg$ \emph{regular} if the arc $\zg$ has both endpoints on the same boundary component (peripheral arc). The terminology transjective versus regular comes from the cluster category $\calc_Q$.

\begin{lemma}
 \label{lem 4.1}
 Let $\calf\colon\cala(Q)\to \ZZ$ be a positive integral frieze of type $\widetilde{\mathbb{A}}_{p,q}$ and let $\xx=(x_1,\ldots,x_n)$ be a cluster such that $\calf(x)=1$ for each regular cluster variable $x\in \xx$ if any. Let $i$ be such that $\calf(x_i)\ge\calf(x_j)$ for all $j$, and suppose that $\calf(x_i)>1$. Let $x_i'$ be the cluster variable obtained from $\xx$ by mutation in direction $i$. Then $\calf(x_i')<\calf(x_i)$ and if $x_i'$ is a regular cluster variable then $\calf(x_i')=1.$
\end{lemma}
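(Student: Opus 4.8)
### Proof proposal

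\textbf{Setup via the annulus.} The plan is to work on the surface model: $\cala(Q)$ is the cluster algebra of an annulus with $p$ and $q$ marked points on the two boundary components, and cluster variables correspond to arcs, with transjective (bridging) arcs crossing from one boundary to the other and regular (peripheral) arcs staying on one side. A cluster $\xx$ is a triangulation $T$ of the annulus; by the structure of annulus triangulations, any triangulation contains at most two bridging arcs, and if there are two they are "parallel'' in a controlled way. The hypothesis says $\calf$ sends every peripheral arc of $T$ to $1$, and $x_i$ is an arc of maximal frieze value, with $\calf(x_i)>1$; in particular $x_i$ must be a bridging arc, since the peripheral arcs all have value $1<\calf(x_i)$. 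First I would set up the exchange relation for mutation at $x_i$: $x_ix_i' = m_1 + m_2$ where $m_1,m_2$ are the two monomials indexed by the two triangles (equivalently, the two arcs completing the quadrilateral in $T$ around $\gamma_i$). Applying $\calf$ gives $\calf(x_i)\calf(x_i') = \calf(m_1)+\calf(m_2)$.

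\textbf{Bounding the monomial values.} The key estimate is that each $\calf(m_k)$ is a product of $\calf$-values of arcs in $T$ \emph{other than} $x_i$ (in the annulus each exchange quadrilateral has its two non-diagonal monomials built from the four boundary arcs of the quadrilateral, none of which is $\gamma_i$ itself — and in the degenerate self-folded or boundary-adjacent cases the relevant factor is just $1$). Since $\calf(x_i)$ is the maximum value and every arc appearing in $m_1,m_2$ is distinct from $\gamma_i$, I want to show $\calf(m_k)<\calf(x_i)^2$, in fact more precisely that the combined right-hand side is $<2\calf(x_i)^2$, hence $\calf(x_i')<2\calf(x_i)$; but to get the sharp inequality $\calf(x_i')<\calf(x_i)$ I would argue that at most one of $m_1,m_2$ can involve another bridging arc of value close to $\calf(x_i)$, while the other monomial is a product of peripheral values, all equal to $1$, times possibly one other arc. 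Concretely: among the four sides of the exchange quadrilateral around $\gamma_i$, the number of bridging arcs is at most one (replacing $\gamma_i$ by $\gamma_i'$ changes the bridging count by $\pm 1$ or keeps it, and a triangulation has $\le 2$ bridging arcs), so one of the two monomials, say $\calf(m_2)$, is a product of frieze values of peripheral arcs only, hence $\calf(m_2)=1$ if all those arcs lie in $T$ — which they do. Then $\calf(x_i)\calf(x_i') = \calf(m_1) + 1 \le \calf(x_i)\cdot(\text{value of the one bridging arc} \le \calf(x_i)) + 1$... this only gives $\calf(x_i')\le \calf(x_i)$, so I need to rule out equality: equality would force $\calf(m_1)=\calf(x_i)^2 - 1$ with the bridging factor exactly $\calf(x_i)$ and the remaining peripheral factors exactly $1$, and I would derive a contradiction from the fact that $m_1$ cannot be a pure product of an arc value with $1$'s and simultaneously equal $\calf(x_i)^2-1$ unless $\calf(x_i)^2 - 1$ is divisible by $\calf(x_i)$, impossible for $\calf(x_i)>1$. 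This divisibility trick is the clean way to close the strict inequality.

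\textbf{The regular case.} For the second assertion, suppose $x_i'$ is a regular (peripheral) cluster variable. Then mutating $x_i$ out of $T$ and $x_i'$ in produces a triangulation $T'$ all of whose arcs are: the peripheral arcs of $T$ (all with value $1$), the other bridging arc of $T$ if any (but I will argue there is exactly one bridging arc in $T$ in this situation, since removing $\gamma_i$ and inserting a peripheral arc $\gamma_i'$ can only happen when $\gamma_i$ together with $\gamma_i'$ bounds a region on one side, forcing $T$ to have had the other bridging arc adjacent), and $x_i'$ itself. I want to conclude $\calf(x_i')=1$. The argument: in an annulus triangulation, a peripheral arc $\gamma_i'$ cuts off a polygon (a "fan'') on one side; inside that polygon all arcs are peripheral, and the relation expressing $x_i'$ in terms of that sub-triangulation is the type-$\mathbb{A}$ relation, so $\calf(x_i')$ is a positive integer determined by peripheral data. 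More directly: since $\calf(x_i')<\calf(x_i)$ and $\calf(x_i)$ was the \emph{maximum}, $x_i'$ now has value $<\calf(x_i)$; if $\calf(x_i')>1$ then $T'$ is a triangulation on which the new maximum is attained at some bridging arc (its value is unchanged, still $\le \calf(x_i)$) but $T'$ still satisfies the Lemma's hypothesis (all peripheral arcs, including $x_i'$ — contradiction if $\calf(x_i')>1$ and $x_i'$ is peripheral). So the hypothesis of the Lemma \emph{applied to $T'$} already forces $\calf(x_i')=1$, because $x_i'$ is a regular cluster variable \emph{of $T'$}. This is the slick route: the second claim is essentially immediate once one observes that the hypothesis "$\calf = 1$ on all regular cluster variables of the cluster'' is exactly what $x_i'$ being regular would violate unless $\calf(x_i')=1$.

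\textbf{Main obstacle.} The hard part will be the careful case analysis of the exchange quadrilateral around a bridging arc in the annulus — in particular handling the degenerate configurations (when $\gamma_i$ is adjacent to a boundary segment, when $p$ or $q$ is small, when $T$ has two bridging arcs that are isotopic up to winding), and verifying in each case that exactly one of the two exchange monomials is a product of peripheral values only. Getting the bookkeeping of which monomial is "peripheral-only'' right, and then invoking the divisibility argument $\calf(x_i)\nmid \calf(x_i)^2-1$ to upgrade $\le$ to $<$, is where the real content sits; the rest is formal.
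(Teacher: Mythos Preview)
Your proposal has a genuine structural error that breaks the argument. The claim that ``any triangulation contains at most two bridging arcs'' is false: an annulus triangulation has $p+q$ arcs, and there exist triangulations consisting \emph{entirely} of bridging arcs (indeed, the paper's proof of Theorem~\ref{thm 2} starts from exactly such a triangulation). Consequently, your subsidiary claim that among the four sides of the exchange quadrilateral at most one is bridging is also wrong. In fact, since $\tau_i$ is bridging, each of the two triangles adjacent to $\tau_i$ has exactly one bridging side and one peripheral (or boundary) side besides $\tau_i$; so among the four sides there are always exactly two bridging arcs. This is what drives the paper's case split: either the two bridging sides are \emph{opposite} (say $\tau_a,\tau_c$), in which case $\tau_i'$ is again bridging and the exchange relation gives $\calf(x_i')=(\calf(x_a)\calf(x_c)+1)/\calf(x_i)$, and a divisibility argument (if $\calf(x_a)=\calf(x_i)$ the quotient is not an integer) forces $\calf(x_a),\calf(x_c)<\calf(x_i)$ and hence $\calf(x_i')<\calf(x_i)$; or the two bridging sides are \emph{adjacent} (say $\tau_a,\tau_d$), in which case $\tau_i'$ is peripheral and $\calf(x_i')=(\calf(x_a)+\calf(x_d))/\calf(x_i)\le 2$.

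Your ``slick route'' for the regular case is circular. The hypothesis ``$\calf=1$ on all regular cluster variables of $\xx$'' is an \emph{assumption} on the input cluster, not a property that every cluster satisfies; if $x_i'$ is regular with $\calf(x_i')>1$, then $\xx'$ simply fails the hypothesis, which is no contradiction. The actual work is to rule out $\calf(x_i')=2$ in the adjacent case, and this is the hardest part of the lemma: if $\calf(x_i')=2$ then $\calf(x_a)=\calf(x_d)=\calf(x_i)$, and the paper examines the quadrilateral around the neighbouring bridging arc $\tau_d$. Either that quadrilateral is of the ``opposite'' type, and the same non-integrality argument gives a contradiction, or it is again of the ``adjacent'' type, forcing the next bridging arc $\tau_e$ also to have value $\calf(x_i)$. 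Iterating produces a fan of bridging arcs all with value $\calf(x_i)$; since the triangulation is finite, the fan terminates at an arc whose quadrilateral must be of the ``opposite'' type, yielding the contradiction. This fan-propagation argument is the missing idea in your proposal.
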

\begin{proof}
 Let $\tau_j$ be the arc corresponding to the cluster variable $x_j$, so that $T=(\tau_1,\ldots,\tau_n)$ is the triangulation corresponding to the cluster $\xx$. The mutation in direction $i$ is given by flipping the arc $\tau_i$ in $T$, and the exchange relation in the cluster algebra is of the form
 \begin{equation}\label{eq 1} x_ix_i'=x_ax_c+x_bx_d\end{equation}
 where $\tau_i$ is the diagonal in the quadrilateral in $T$ with sides $\tau_a,\tau_b,\tau_c,\tau_d$ as in Figure \ref{fig 1} some of which may be boundary edges.
 
\begin{figure}
\scalebox{0.8}{ 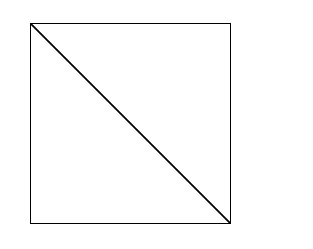}
 \caption{Quadrilateral in the triangulation $T$.}
 \label{fig 1}
\end{figure}

Our assumption that $\calf(x_i)>1$ and $\calf(x)=1$ for every regular cluster variable $x\in \xx$ imply that $x_i$ is transjective. Hence $\tau_i$ is a bridging arc, so its endpoints lie on different boundary components. Therefore one of the arcs $\tau_a,\tau_b$ is bridging and the other is peripheral (or a boundary edge), and also one of  $\tau_c,\tau_d$ is bridging and the other is peripheral (or a boundary edge). We assume without loss of generality that $\tau_a$ is bridging and consider two cases. 

Suppose first that $\tau_c$ is bridging. Then the relation (\ref{eq 1}) implies
\begin{equation}
 \label{eq 2}
 \calf(x_i')=(\calf(x_a)\calf(x_c)+1)/\calf(x_i)
\end{equation}
because the frieze has value 1 on the two regular variables (or boundary edge weights) $x_b$ and $x_d$. Note that in this case the flipped arc $\tau'_i$ is bridging. Recall that $\calf(x_a)\le\calf(x_i)$ and $\calf(x_c)\le \calf(x_i)$. If $\calf(x_a)=\calf(x_i)$ then the right hand side of (\ref{eq 2}) would be equal to $\calf(x_c)+ 1/\calf(x_i)$ which is not an integer. Thus $\calf(x_a)<\calf(x_i)$ and similarly $\calf(x_c)<\calf(x_i)$. Therefore the right hand side of (\ref{eq 2}) is at most $((\calf(x_i)-1)^2+1)/\calf(x_i)=\calf(x_i)-2+(2/\calf(x_i))$ which is strictly smaller than $\calf(x_i)$, and we are done.

Suppose now that $\tau_c$ is a peripheral arc. Then $\tau_d$ is bridging and the relation (\ref{eq 1}) implies
\begin{equation}
 \label{eq 3}
 \calf(x_i')=(\calf(x_a)+\calf(x_d))/\calf(x_i)
\end{equation}
Note that in this case the arc $\tau_i'$ is peripheral and forms a triangle with the two peripheral arcs $\tau_b$ and $\tau_c$. We will show that $\calf(x_i')=1$. Since $\calf(x_i)$ is the maximal frieze value in $\xx$, equation (\ref{eq 3}) yields 
$ \calf(x_i')\le 2\calf(x_i)/\calf(x_i)=2.
$
If $\calf(x_i')=1$ we are done. Assume therefore that $\calf(x_i')=2$. 
Then equation (\ref{eq 3}) implies
\begin{equation}
 \label{eq 4}
 \calf(x_a)=\calf(x_d)=\calf(x_i)\ge 2.
\end{equation}
Consider the quadrilateral in $T$ in which $\tau_d$ is the diagonal and denote its sides $\tau_i,\tau_c,\tau_e,\tau_f$ where $\tau_i,\tau_e $ are bridging arcs and $\tau_c,\tau_f$ are peripheral, see Figure \ref{fig 2}.

\begin{figure}
\scalebox{0.7}{ 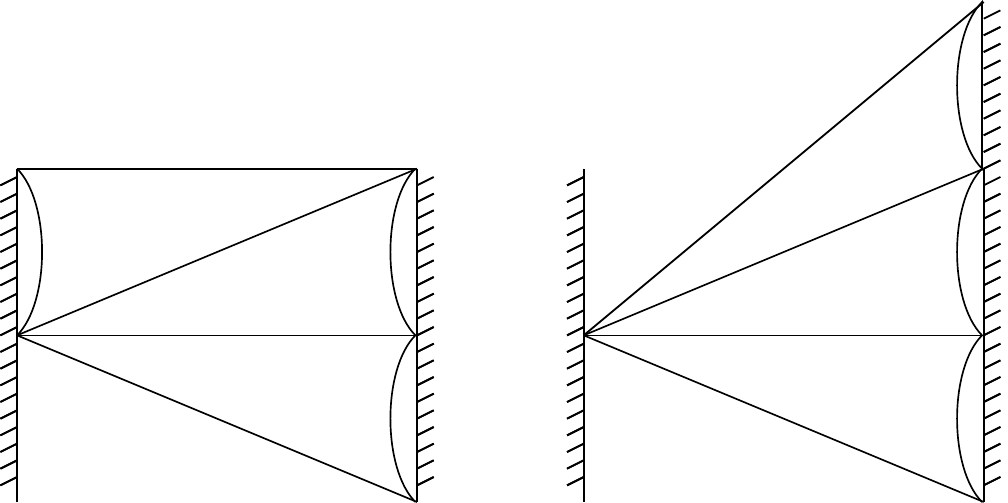}
 \caption{Two possible configurations in the triangulation $T$ when $\tau_c$ is  a peripheral arc or a boundary edge.}
 \label{fig 2}
\end{figure}

Let $x_d'$ be the cluster variable obtained by mutating $\xx$ in direction $d$. Then in the situation of the left picture in Figure \ref{fig 2} we have
\[\calf(x_d')=(\calf(x_i)\calf(x_e)+1)/\calf(x_d) =\calf(x_e) +1/\calf(x_i),\] 
where the last equality holds by (\ref{eq 4}). But since $\calf(x_i)\ge 2$, this expression is not an integer, so we have a contradiction. 

Therefore we must be in the situation of the right picture in Figure \ref{fig 2}, and we have
\[\calf(x_d')=(\calf(x_i)+\calf(x_e))/\calf(x_d) =1+\calf(x_e)/\calf(x_i),\]
where the last identity holds by (\ref{eq 4}). Since $\calf(x_i)\ge\calf(x_e)$ and $\calf$ is  a positive integral frieze, we must have $\calf(x_i)=\calf(x_e)$ and $\calf(x_d')=2$.

 We have thus shown that if $\calf(x_i')=2$ then the triangulation $T$ contains a fan of bridging arcs $\tau_i,\tau_d,\tau_e$ and $\calf(x_d')=2, \calf(x_e)=\calf(x_i)$.
We can now repeat this argument by considering the cluster variable $x_e'$ obtained by mutating $\xx$ in direction $e$, and recursively with every new bridging arc in the fan and we obtain a fan of bridging arcs in $T$ and each arc in this fan has the same frieze value $\calf(x_i)\ge 2$. Since $T$ is a triangulation of the annulus, this fan is finite, and the two arcs bounding it correspond to a sink and a source in the quiver $Q_T$. Mutating at one of those arcs  again gives a contradiction as in the left picture of Figure \ref{fig 2}. We have shown that $\calf(x_i')$ cannot be equal to 2, and thus $\calf(x_i')=1$. 
\end{proof}

We are now ready for the main theorem of this section.
\begin{theorem}
\label{thm 2} Let $Q$ be a quiver of type $\widetilde{\mathbb{A}}_{p,q}$ and let $\calf\colon\cala(Q)\to \ZZ$ be a positive integral frieze. Then $\calf$ is unitary.
\end{theorem}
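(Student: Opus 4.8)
The plan is to turn Lemma~\ref{lem 4.1} into an explicit algorithm: starting from a carefully chosen cluster, mutate $\xx$ repeatedly until $\calf$ is identically $1$ on it, and such a cluster exhibits $\calf$ as unitary. First I would fix the starting cluster. Working in the surface model, recall that the annulus of type $\widetilde{\mathbb{A}}_{p,q}$ admits a triangulation with no peripheral arc, i.e.\ one in which every arc is bridging: in such a triangulation each triangle has exactly one side on a boundary component, so there are $p$ triangles carrying a boundary segment of one component and $q$ carrying a boundary segment of the other, glued cyclically along $p+q$ bridging arcs, and such a configuration is produced, for instance, by a ``staircase'' triangulation of the universal cover (two rows of marked points in an infinite strip, triangulated using only top-to-bottom arcs) projected back down. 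Let $\xx_0$ be the associated cluster; it contains no regular cluster variable.

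Next I would run the following procedure starting at $\xx=\xx_0$: if $\calf(x)=1$ for every $x\in\xx$, stop; otherwise pick an index $i$ with $\calf(x_i)$ maximal among $\calf(x_1),\dots,\calf(x_n)$ (so $\calf(x_i)>1$) and replace $\xx$ by $\mu_i(\xx)$. I claim that the property ``$\calf(x)=1$ for every regular cluster variable $x\in\xx$'' is an invariant of the procedure; it holds for $\xx_0$ since $\xx_0$ has no regular variable at all. Granting the invariant, at each step $\calf(x_i)>1$ forces $x_i$ to be transjective, so Lemma~\ref{lem 4.1} applies and yields $\calf(x_i')<\calf(x_i)$ together with $\calf(x_i')=1$ whenever $x_i'$ is regular. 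Since mutating in direction $i$ changes only the $i$-th cluster variable, every regular variable of $\mu_i(\xx)$ still has $\calf$-value $1$, which establishes the invariant by induction and keeps Lemma~\ref{lem 4.1} applicable throughout the run.

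For termination, assign to each cluster $\xx$ occurring in the procedure the pair $\bigl(\max_j\calf(x_j),\ \#\{j:\calf(x_j)=\max_k\calf(x_k)\}\bigr)\in\ZZ_{>0}\times\ZZ_{>0}$, ordered lexicographically. A single step replaces the maximal-value variable $x_i$ by $x_i'$ with strictly smaller $\calf$-value and leaves every other value unchanged; since the other values are all $\le\calf(x_i)$, the pair strictly decreases (the maximum drops, or it is unchanged while its multiplicity decreases by one). As the lexicographic order on $\ZZ_{>0}\times\ZZ_{>0}$ is well-founded, the procedure halts, and it can only halt at a cluster $\xx$ with $\calf(x)=1$ for all $x\in\xx$. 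Then $\calf$ sends each variable of $\xx$ to the unit $1\in\ZZ$, so $\calf$ is unitary; by Proposition~\ref{prop 4}, $\xx$ is in fact the unique cluster with this property, matching the constructive statement advertised before the theorem.

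I expect the substantive difficulty to sit entirely inside Lemma~\ref{lem 4.1}, specifically the exclusion of the value $2$ via the fan-of-bridging-arcs analysis in $T$; once that lemma is granted, the remaining argument is the bookkeeping above, and the only extra input is the elementary existence of a no-peripheral triangulation to seed the recursion.
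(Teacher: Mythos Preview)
Your proposal is correct and follows essentially the same route as the paper: start from a cluster whose triangulation consists entirely of bridging arcs, repeatedly mutate at a variable with maximal $\calf$-value while maintaining the invariant that every regular variable present has $\calf$-value $1$, and invoke Lemma~\ref{lem 4.1} at each step. Your version is a bit more explicit than the paper's in two places---you spell out why an all-bridging triangulation exists and you give a clean lexicographic termination measure $(\max_j\calf(x_j),\ \#\{j:\calf(x_j)\text{ is maximal}\})$, whereas the paper simply says ``since the frieze is positive integral this process must stop''---but these are elaborations of the same argument rather than a different approach.
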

\begin{proof}
We need to show that there exists a cluster $\xx'$ such that $\calf(\xx')=(1,\ldots,1)$. Let $\xx_0$ be a cluster consisting entirely of transjective cluster variables. Its triangulation $T_0$ consists entirely of bridging arcs. Then $\xx_0=(x_1,\ldots,x_n)$ is a cluster that satisfies the condition of Lemma \ref{lem 4.1}. If $\calf(\xx_0)=(1,\ldots,1)$ we are done. Otherwise Lemma \ref{lem 4.1} implies that mutating at a cluster variable $x_i$ with maximal frieze value will produce a cluster $\xx_1=(\xx_0\setminus\{x_i\})\cup\{x_i'\}$ such that $\calf(x_i')<\calf(x_i)$ and if $x_i'$ is regular then $\calf(x_i')=1$. Therefore, if $\calf(\xx_1)\ne(1,\ldots,1)$ then the cluster $\xx_1$ also satisfies the hypothesis of Lemma~\ref{lem 4.1}, and we can repeat this procedure to produce a sequence of clusters $\xx_0,\xx_1,\ldots,\xx_s,\ldots$ such that $\xx_s=(\xx_{s-1}\setminus\{x\})\cup\{x'\}$ with $\calf(\xx_s)\ne(1,\ldots,1)$ and $\calf(x')<\calf(x)$. Since the frieze is positive integral this process must stop. Thus there is a cluster $\xx_t$ such that $\calf(\xx_t)=(1,\ldots,1)$. 
\end{proof}

\subsection{Friezes of type $\widetilde{\mathbb{A}}_{2,1}$}
There are precisely two positive integral friezes of type $\widetilde{\mathbb{A}}_{2,1}$ up to symmetry, and they are depicted in Figures \ref{fig:A1_2_acyclic} and  \ref{fig:A1_2_cyclic}. By Theorem \ref{thm 2} both are unitary. 
In the first example, the cluster $\xx$ with $\calf(\xx)=(1,1,1)$ is transjective and in the second example one of the cluster variables in $\xx$ is regular. In the figures, we show the values of the friezes on the transjective component of the Auslander-Reiten quiver.

\begin{figure}[h]
\tiny\def\myxscale{0.60}

\begin{tikzpicture}[xscale=\myxscale] 
\node at (-7.3,-1) {$\dots$};
\draw node at (12.5,-1) {$\dots$};

\def\myshift{0.5}

\foreach \n in {-2,...,5}
{
  \foreach \vertex in
  {0,1,2}
  {
    \path[black] (\n*\myshift-\vertex+2*\n,-\vertex) node (x\vertex\n) {};
  }
  \foreach \source/\target in 
  {2/1,1/0}
  {
    \path[->,>=stealth] (x\source\n) edge[blue] (x\target\n);
  }
  \foreach \source/\target in 
  {2/0}
  {
    \path[->,>=stealth] (x\source\n) edge[blue, bend left=55] (x\target\n);
  }  
}

\foreach \nminusone/\n in 
{
-2/-1,-1/0,0/1,1/2,2/3,3/4,4/5}
{
  \foreach \s/\t in 
  {1/2,0/2,0/1}
  {
    \path[->,>=stealth] (x\s\nminusone) edge[red] (x\t\n);
  } 
}  

\foreach \vertex/\n/\weight in
  {
0/-2/11,1/-2/26,2/-2/41,
0/-1/2,1/-1/3,2/-1/7,
0/0/1,1/0/1,2/0/1,
0/1/7,1/1/3,2/1/2,
0/2/41,1/2/26,2/2/11,
0/3/\ \ 362,1/3/153,2/3/97,
0/4/\ \ \ 2131,1/4/1351,2/4/571,
0/5/\ \ \ \ \ 18817,1/5/7953,2/5/5042
  }
  {
    \path[black] (\n*\myshift-\vertex+2*\n,-\vertex) node (x\vertex\n) {\weight};
  }
\end{tikzpicture}  
\caption{An $\widetilde{A}_{1,2}$ frieze obtained by specializing the cluster variables of an acyclic seed to $1$. The two peripheral arcs have  frieze values $2$ and $3$.}\label{fig:A1_2_acyclic}.\

\begin{tikzpicture}[xscale=\myxscale] 
\node at (-7.3,-1) {$\dots$};
\draw node at (12.5,-1) {$\dots$};

\def\myshift{0.5}

\foreach \n in {-2,...,5}
{
  \foreach \vertex in
  {0,1,2}
  {
    \path[black] (\n*\myshift-\vertex+2*\n,-\vertex) node (x\vertex\n) {};
  }
  \foreach \source/\target in 
  {2/1,1/0}
  {
    \path[->,>=stealth] (x\source\n) edge[blue] (x\target\n);
  }
  \foreach \source/\target in 
  {2/0}
  {
    \path[->,>=stealth] (x\source\n) edge[blue, bend left=55] (x\target\n);
  }  
}

\foreach \nminusone/\n in 
{
-2/-1,-1/0,0/1,1/2,2/3,3/4,4/5}
{
  \foreach \s/\t in 
  {1/2,0/2,0/1}
  {
    \path[->,>=stealth] (x\s\nminusone) edge[red] (x\t\n);
  } 
}  

\foreach \vertex/\n/\weight in
  {
0/-2/5,1/-2/18,2/-2/13,
0/-1/3,1/-1/2,2/-1/7,
0/0/1,1/0/2,2/0/1,
0/1/7,1/1/2,2/1/3,
0/2/13,1/2/18,2/2/5,
0/3/\ \ 123,1/3/34,2/3/47,
0/4/\ \ 233,1/4/322,2/4/89,
0/5/\ \ \ \ 2207,1/5/610,2/5/843
  }
  {
    \path[black] (\n*\myshift-\vertex+2*\n,-\vertex) node (x\vertex\n) {\weight};
  }

\end{tikzpicture}  
\caption{An $\widetilde{A}_{1,2}$ frieze obtained by specializing the cluster variables of a non-acyclic seed to $1$. The two peripheral arcs have frieze values $1$ and $5$.}
\label{fig:A1_2_cyclic}

\end{figure}


\subsection{Further unitarity questions} 
It was shown in \cite{ConCox1,ConCox2} that every positive integral frieze of Dynkin type $\mathbb{A}_n$ is unitary, and by Theorem \ref{thm 2}, the same is true for affine type $\widetilde{\mathbb{A}}_{p,q}$. It is natural to ask if these results can be extended to friezes with values in other integral domains, for example in quadratic integer rings. However the following example shows that the result  already fails over the Gaussian integers.

\begin{example}
 Let $Q$ be the quiver $1\to 2$ and define a frieze $\calf\colon\cala(Q)\to\mathbb{Z}[i]$ by $\calf(x_1)=1$ and $\calf(x_2)=1+i$. We can visualize $\calf$ as usual in the Auslander-Reiten quiver as follows
 \[ \xymatrix{&&1+i\ar[rd]&&2-i\ar[rd]&&1\ar[rd]\\
 &1\ar[ru]&&2+i\ar[ru]&&1-i\ar[ru] &&1+i}
 \]
 This is a non-unitary frieze of Dynkin type $\mathbb{A}_2$. 
 We don't know if there exists a non-unitary frieze whose entries are ``positive'' in the sense that they are of the form $a+bi$ with $a,b\ge0$.
\end{example}

\subsubsection{Other Dynkin or affine types} For Dynkin types $\mathbb{D} $ and $\mathbb{E}$ there are non-unitary positive integral friezes, see \cite{FP}, and these examples also give rise to non-unitary positive integral friezes in the affine types $\widetilde{\mathbb{D}}$ and $\widetilde{\mathbb{E}}$.

\subsection*{Acknowledgements}
We thank A. Garcia Elsener, G. Musiker and P.-G. Plamondon for helpful discussions. 
We also thank the anonymous referees for valuable feedback and suggestions. 

\printbibliography

\end{document}